\newtheorem{thm}{Theorem}[section]
\newtheorem{prop}[thm]{Proposition}
\newtheorem{lem}[thm]{Lemma}
\newtheorem{defi}[thm]{Definition}
\newtheorem{remark}[thm]{Remark}
\newtheorem{example}[thm]{Example}
\newtheorem{pb}[thm]{Problem}
\newtheorem{theoA}{Theorem}
\newtheorem{remA}[theoA]{Remark}
\numberwithin{equation}{section}
\newcommand{\R}{{\mathbb R}}
\newcommand{\real}{{\mathbb R}}
\newcommand{\ent}{{\mathbb Z}}
\newcommand{\norm}[1]{\left\Vert#1\right\Vert}
\newcommand{\abs}[1]{\left\vert#1\right\vert}
\newcommand{\M}{{\mathcal M}}
\newcommand{\8}{\infty}
\begin{document}

\title[Differential transforms for heat semigroups]
{Boundedness of differential transforms for heat semigroups generated by fractional Laplacian}

\thanks{{\it 2020 Mathematics Subject Classification:} 42B20, 42B25.}
\thanks{{\it Key words:} differential transforms,  heat semigroup, fractional Laplacian, maximal operator, lacunary sequence.}

\author{Xinyu Ren}

 \address{School of Statistics and Mathematics \\
             Zhejiang Gongshang University \\
             Hangzhou 310018, P.R. China}
 \email{renxinyuanhui@163.com}

\author{ Chao Zhang}

 \address{School of Statistics and Mathematics \\
             Zhejiang Gongshang University \\
             Hangzhou 310018, P.R. China}
 \email{zaoyangzhangchao@163.com}

\thanks{Chao Zhang is supported by the National Natural Science Foundation of China(Grant No. 11971431), the Zhejiang Provincial Natural Science Foundation of China(Grant No. LY18A010006), and the first Class Discipline of Zhejiang-A(Zhejiang Gongshang University-Statistics.}

\date{}
\maketitle

\begin{abstract}
 In this paper we analyze the convergence of the following type of series
\begin{equation*}  T_N f(x)=\sum_{j=N_1}^{N_2} v_j\Big(e^{-a_{j+1}(-\Delta)^\alpha} f(x)-e^{-a_{j}(-\Delta)^\alpha} f(x)\Big),\quad x\in \mathbb R^n,
\end{equation*}
where  $\{e^{-t(-\Delta)^\alpha} \}_{t>0}$ is the heat semigroup of the fractional Laplacian $(-\Delta)^\alpha,$  $N=(N_1, N_2)\in \mathbb Z^2$ with $N_1<N_2,$ $\{v_j\}_{j\in \mathbb Z}$ is a bounded real sequences  and $\{a_j\}_{j\in \mathbb Z}$ is an increasing real sequence.
Our analysis will consist in the boundedness, in  $L^p(\mathbb{R}^n)$ and in  $BMO(\mathbb{R}^n)$,  of the operators
$T_N$ and its maximal operator $\displaystyle T^*f(x)= \sup_N |T_N f(x)|.$
It is also shown that the local size of the maximal differential transform operators  is the same with the order  of a singular integral for functions $f$ having local support.
 \end{abstract}


\vskip 1cm

\section{Introduction} \label{Sec:L2}

Let $\displaystyle \Delta = \sum_{j=1}^n\frac{\partial^2}{\partial x^2_j}$ be the Laplace operator in $\mathbb{R}^n$. Its heat semigroup is defined by
 \begin{align*}\label{Formu:HeatPointwise}
e^{t\Delta}\varphi(x)=\int_{\real^n} e^{t\Delta}(x-y)\varphi(y)dy,~x\in \real^{n},\ \ t>0,
\end{align*}
where $e^{t\Delta}(x)$ denotes the Gauss-Weierstrass kernel
  $$e^{t\Delta}(x)=\frac{1}{(4\pi t)^{n/2}} e^{-\frac{|x|^2}{4t}}.$$
And the  fractional Laplacian can be defined as a pseudo-differential operator via the Fourier  transform
$$ \mathcal{F}((-\Delta)^\alpha f)(\xi)=|\xi|^{2\alpha}\mathcal F( f) (\xi),$$ where $\mathcal F$ is the Fourier transform. The corresponding fractional heat semigroup is defined as
\begin{equation*}
 \mathcal F\left(e^{-t(-\Delta)^\alpha}f\right) (\xi):=e^{-t|\xi|^{2\alpha}}\mathcal F (f)(\xi), \quad  \alpha\in (0, 1).
\end{equation*}
When $\alpha=1/2$, it is the Poisson semigroup.
In the literature, the fractional heat semigroup $\{e^{- {t(-\Delta)^\alpha}}\}_{t>0}$ has widely used in the study of partial differential equations, harmonic analysis, potential theory and modern probability theory.  For example,
the semigroup $\{e^{- {t(-\Delta)^\alpha}}\}_{t>0}$  is usually applied to construct the linear part of solutions to fluid equations in the mathematic physics, e.g. the generalized Naiver-Stokes equation, the quasi-geostrophic equation, the
MHD equations. In fact, $e^{- {t(-\Delta)^\alpha}}f(x)$ is the solution of the heat equation related to the fractional Laplacian:
 \begin{align*}
\left\{
 \begin{array}{rrl}
  \partial_tu(x,t)+(-\Delta)^\alpha u(x,t)&=&0,\ \ \ \quad   (x,t)\in   \real_+^{n+1},\\
  u(x,0)&=&f(x), \ \ x\in \real^n.
 \end{array}
 \right.
 \end{align*}
 And, in the field of probability theory, the researchers use $\{e^{- {t(-\Delta)^\alpha}}\}_{t>0}$  to describe some kind
of Markov processes with jumps. For further information and the related applications of fractional heat
semigroups $\{e^{- {t(-\Delta)^\alpha}}\}_{t>0}$ , we refer the reader to \cite{CX, Grigo}. In \cite{MYZ}, by an invariant derivative technique and the Fourier analysis method, the authors concluded that the kernel, $ e^{- {t(-\Delta)^\alpha}}(x) $ satisfy the following pointwise estimate
$$0<{e^{- {t(-\Delta)^\alpha}}(x)}\le \frac t{(t^{1/2\alpha}+|x|)^{n+2\alpha}}, \quad (x,t)\in \real_+^{n+1}.$$

In this article, we will introduce the heat semigroup $\{e^{- {t(-\Delta)^\alpha}}\}_{t>0}$ into the analysis of martingale transforms in probability. Martingale transforms was considered firstly by D. L. Burkholder in 1966; see \cite{Burk}. In \cite{Burk}, the author proved the almost everywhere convergence of the martingale transforms. In martingale theory, we always treat the martingale transforms as a corresponding tool of the singular integral operators in harmonic analysis. In fact, we want to analyze the behavior of the following type sum
\begin{equation}\label{Formu:SquareFun}
 \sum_{j\in \ent} v_j(e^{-a_{j+1}(-\Delta)^\alpha} f(x)-e^{-a_{j}(-\Delta)^\alpha} f(x))
\end{equation}
where $\{v_j\}_{j\in \ent}$ is a bounded sequence of real numbers and  $\{a_j\}_{j\in \ent}$ is  an increasing  sequence of positive numbers. Observe that in the case $v_j\equiv1$, the above series is  telescopy, and their behavior coincide with $e^{-t(-\Delta)^\alpha} f(x)$. This way of analyzing convergence of sequences was considered by Jones and Rosemblatt for ergodic averages(see \cite{JR}), and latter by Bernardis et al. for differential transforms(see \cite{BLMMDT}). The authors considered the differential transforms related to the one-sided fractional Poisson type operator sequence and the heat semigroup generated by Laplacian(see \cite{ZMT, ZT}).

To better understand the behavior of the sum in (\ref{Formu:SquareFun}), we shall analyze its ``partial sums'' defined as follows.
For each $N\in \ent^2,~N=(N_1,N_2)$ with $N_1<N_2,$ we define the partial sum operators
\begin{equation}\label{Formu:FinSquareFun}
 T_N f(x)=\sum_{j=N_1}^{N_2} v_j(e^{-a_{j+1}(-\Delta)^\alpha} f(x)-e^{-a_{j}(-\Delta)^\alpha} f(x)),\ x\in \mathbb R^n.
\end{equation}
 We shall also consider the   maximal operator
\begin{equation}\label{Formu:MaxSquareFun}
 T^*f(x)=\sup_N \abs{T_N f(x)}, \quad x\in\real^n,
 \end{equation}
where the supremum are taken over all $N=(N_1,N_2)\in \ent^2$ with $N_1< N_2$.
In \cite{ZMT}, the authors proved the boundedness of the above operators related with the one-sided fractional Poisson type operator sequence. And the same results was gotten for the above operators related with the heat semigroup generated by Laplacian in \cite{ZT}.

Some of our results will be valid only when the sequence  $\{a_j\}_{j\in \mathbb Z}$ is lacunary.  That means that  there exists a $\lambda >1$ such that $\displaystyle \frac{a_{j+1}}{a_j} \ge \lambda, \, j \in \mathbb{Z}$.  In particular, we shall prove  the boundedness of the operator $T^*$ in the weighted spaces
$L^p(\mathbb R^{n}, \omega),$ where $\omega$ is  the usual Muckenhoupt weight on $\mathbb R^{n}$. We refer the reader to the book by J. Duoandikoetxea \cite[Chapter 7]{Duo} for definitions and properties of the $A_p$ classes.
We shall also analyze the boundedness behavior of the operators in $L^\infty$ and  $BMO$  spaces. The space $BMO(\real^n)$ is defined as the set of functions $f$ such that $f^{\sharp}  \in L^\infty(\mathbb{R}^n)$, where
$$f^{\sharp} (x) = \sup_{x \in B} \Big\{ \frac1{|B|} \int_B\Big|f(z)- \frac1{|B|} \int_B f \Big| dz    \Big\}.$$
We define  $\|f\|_{BMO(\R^n)}= \|f^{\sharp}  \|_{L^\infty(\mathbb{R}^n)}$.  In fact, we have the following results:
\begin{thm}\label{Thm:PoissonLp}    Assume that the sequence $\{a_j\}_{j\in \mathbb Z}$ is a $\lambda$-lacunary sequence with $\lambda >1$. Let $T^*$ be  the operator defined in \eqref{Formu:MaxSquareFun}. Then
\begin{enumerate}[(a)]
    \item for any $1<p<\infty$ and $\omega\in A_p$,  there exists a constant $C$ depending  on $n, p,\omega,  \lambda$, $\norm{v}_{l^\infty(\mathbb Z)}$ and $\alpha$ such that
 $$\norm{T^* f}_{L^p(\mathbb R^{n}, \omega)}\leq C\norm{f}_{L^p(\mathbb R^{n}, \omega)},$$
 for all functions $f\in L^p(\real^{n}, \omega).$
    \item for any  $\omega\in A_1$, there exists a constant $C$ depending  on $n, \omega, \lambda$,  $\norm{v}_{\ell^\infty(\mathbb Z)}$  and  $\alpha$  such that
 $$\omega\left({\{x\in \real^{n}:\abs{T^* f(x)}>\sigma\}}\right) \le C\frac{1}{\sigma}\norm{f}_{L^1(\mathbb R^{n}, \omega)}, \quad \sigma>0,$$
for all functions $f\in L^1(\real^{n}, \omega).$

\item  given $f\in L^\infty(\real^n),$ then either $T^* f(x) =\infty$ for all $x\in \mathbb R^n$, or $T^* f(x) < \infty$ for $a. e.$  $x\in \mathbb R^n$. And in this latter case, there exists a constant $C$ depending  on $n, \lambda$,  $\norm{v}_{\ell^\infty(\mathbb Z)}$ and $\alpha$ such that
$$\norm{T^*f}_{BMO(\mathbb R^n)}\leq C\norm{f}_{L^\infty(\mathbb R^n)}.$$
\item  given $f\in BMO(\real^n),$ then either $T^* f(x) =\infty$ for all $x\in \mathbb R^n$, or $T^* f(x) < \infty$ for $a. e.$  $x\in \mathbb R^n$.  And in this latter case,  there exists a constant $C$ depending  on $n, \lambda$, $\norm{v}_{\ell^\infty(\mathbb Z)}$ and $\alpha$ such that
\begin{equation}\label{sharp}\norm{T^*f}_{ BMO(\mathbb R^n)}\leq C\norm{f}_{BMO(\mathbb R^n)}.
\end{equation}
\end{enumerate}
The constants $C$ appeared above all are independent of $N.$
\end{thm}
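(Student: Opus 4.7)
The plan is to realise each $T_N$ as a convolution operator $T_N f = \Phi_N \ast f$ with
\begin{equation*}
\Phi_N(x) = \sum_{j=N_1}^{N_2} v_j\bigl(p_{a_{j+1}}(x) - p_{a_j}(x)\bigr),
\end{equation*}
where $p_t(x)$ denotes the kernel of $e^{-t(-\Delta)^\alpha}$, and then to prove two things uniformly in $N$: a Calderón--Zygmund size/smoothness estimate on $\Phi_N$, and $L^2(\mathbb R^n)$ boundedness of $T^*$. Granted these, parts (a) and (b) follow from the weighted theory of Calderón--Zygmund operators, while (c) and (d) are obtained via sharp-function estimates.

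The key technical step is the pair of uniform estimates
\begin{equation*}
|\Phi_N(x)| \leq \frac{C}{|x|^n}, \qquad |\nabla \Phi_N(x)| \leq \frac{C}{|x|^{n+1}}.
\end{equation*}
To prove them, I would split the sum at the index $j_0$ determined by $a_{j_0}^{1/(2\alpha)} \le |x| < a_{j_0+1}^{1/(2\alpha)}$. On $j \le j_0$, the bound $|p_t(x)| \lesssim t/|x|^{n+2\alpha}$ combined with the lacunarity $a_{j+1} \ge \lambda a_j$ yields a geometric sum controlled by $a_{j_0+1}/|x|^{n+2\alpha} \lesssim |x|^{-n}$. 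On $j > j_0$, the diagonal bound $|p_t(x)| \lesssim t^{-n/(2\alpha)}$ and geometric summation give a tail bounded by $a_{j_0+1}^{-n/(2\alpha)} \lesssim |x|^{-n}$. The gradient estimate is obtained in the same way after first checking, via differentiation of the Fourier multiplier representation (or a scaling argument), that $|\nabla p_t(x)| \lesssim t(t^{1/(2\alpha)}+|x|)^{-n-2\alpha-1}$.

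For the $L^2$ boundedness of $T^*$, Plancherel yields a uniform bound on each $T_N$ since the multiplier $\sum_{j=N_1}^{N_2} v_j(e^{-a_{j+1}|\xi|^{2\alpha}}-e^{-a_j|\xi|^{2\alpha}})$ is uniformly controlled by $\|v\|_\infty$ (telescoping the positive parts). To pass to the supremum in $N$, I would compare $T_N$ with the operator $\widetilde T$ associated with the formal full-sum kernel $\sum_{j\in\mathbb Z} v_j(p_{a_{j+1}}-p_{a_j})$: the lacunarity ensures that the tail $T_N f - \widetilde T f$ is pointwise dominated by the Hardy--Littlewood maximal function $\mathcal M f$ (via direct integration using the kernel bounds on dyadic shells), giving $T^* f(x) \le |\widetilde T f(x)| + C\mathcal M f(x)$. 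Since both $\widetilde T$ (a Calderón--Zygmund operator, by the kernel estimates above) and $\mathcal M$ are bounded on $L^2$, so is $T^*$, and the uniform Calderón--Zygmund estimates then deliver (a) for $1<p<\infty$, $\omega \in A_p$, and (b) for $\omega\in A_1$, via classical weighted theory (see \cite[Chapter 7]{Duo}).

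For (c) and (d) I would employ the sharp-function method: for any ball $B$ of radius $r$ centred at $x_0$, split $f=f\chi_{2B}+f\chi_{(2B)^c}$; the $L^2$ bound on $T^*$ handles the local part, while the smoothness estimate on $\Phi_N$ shows that the non-local contribution on $B$ differs from a constant independent of both $B$ and $N$ by at most $CM_sf(x_0)$ for any $s>1$. This yields $(T^*f)^\sharp(x) \le C\|f\|_{L^\infty}$ for (c) and $(T^*f)^\sharp(x) \le CM_sf(x)$ for (d), from which the conclusions follow directly from the definition of $\|\cdot\|_{BMO}$ in the first case and from the $BMO$-boundedness of $M_s$ ($s>1$) in the second. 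The dichotomy ``$T^*f \equiv \infty$ or $T^*f<\infty$ a.e.'' is standard for positive, sublinear, lower-semicontinuous operators. The main obstacle in this scheme is precisely the passage from a uniform $L^2$ bound on each $T_N$ to a bound on the supremum $T^*$; the essential point is that the lacunarity of $\{a_j\}$ forces the truncation error to behave like an average over a ball, which is therefore absorbed by $\mathcal M f$ uniformly in $N$.
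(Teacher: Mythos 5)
Your kernel estimates and the uniform $L^2$/Calder\'on--Zygmund treatment of the single operators $T_N$ match the paper (Propositions \ref{Thm:L2Estimate} and \ref{Thm:KernelEst}, Theorem \ref{Thm:BMO}), but the step on which everything else hinges --- passing from the uniform bounds on $T_N$ to the maximal operator $T^*$ --- contains a genuine gap. You claim that the tail $T_Nf-\widetilde Tf$, i.e.\ the sums over $j<N_1$ and $j>N_2$, is pointwise dominated by $C\mathcal{M}f$ uniformly in $N$ ``via direct integration using the kernel bounds on dyadic shells.'' This fails: estimating by absolute values, the lower tail kernel is controlled only by
$\int_0^{a_{N_1}}(t^{1/2\alpha}+|y|)^{-n-2\alpha}\,dt\sim\min\bigl(|y|^{-n},\,a_{N_1}|y|^{-n-2\alpha}\bigr)$,
which is singular like $|y|^{-n}$ near the origin, and the upper tail kernel is controlled only by
$\int_{a_{N_2+1}}^{\infty}(t^{1/2\alpha}+|y|)^{-n-2\alpha}\,dt\sim(a_{N_2+1}^{1/2\alpha}+|y|)^{-n}$,
which is not integrable at infinity. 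Neither is an $L^1$ radially decreasing majorant, so convolution against them is \emph{not} dominated by the Hardy--Littlewood maximal function; and since the $v_j$ are arbitrary bounded coefficients, no telescoping cancellation is available to improve these absolute-value bounds. In other words, the tails are themselves (truncated) singular-integral objects, and controlling $\sup_N$ is exactly the hard part of the theorem, not an absorbable error. The paper handles this with a Cotlar-type inequality (Theorem \ref{Thm:Maximalcontrol}): $T_M^*f(x)\le C\{\mathcal{M}(T_{(-M,M)}f)(x)+\mathcal{M}_qf(x)\}$, where the maximal function is applied to $T_{(-M,M)}f$ rather than to $f$, and whose proof requires averaging $T_{(m,M)}f$ over the ball $B(x_0,\tfrac12 a_{m-1}^{1/2\alpha})$, the kernel smoothness for the far part, and crucially Lemma \ref{cotlar}(ii), a lacunarity-driven estimate giving geometric decay $\lambda^{-(k-m+1)}$ for the lower-tail kernel off the diagonal (after the reduction of Lemma \ref{Prop:lacunary} to $\lambda\le a_{j+1}/a_j\le\lambda^2$). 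Your scheme contains no substitute for this mechanism.

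Two secondary points. First, the dichotomy in (c)--(d) (``$T^*f\equiv\infty$ or finite a.e.'') is not a formal consequence of sublinearity; the paper proves it by showing, via the gradient estimate on $K_N$ and the decomposition $f=(f-f_B)\chi_B+(f-f_B)\chi_{B^c}+f_B$, that finiteness of $T^*f$ at one point propagates, with $\sup_N|T_Nf_2(x)-T_Nf_2(x_0)|\le C\|f\|_{BMO(\real^n)}$. Second, your sharp-function bound $(T^*f)^\sharp\le C\mathcal M_sf$ cannot give (d): for $f\in BMO(\real^n)\setminus L^\infty$ (e.g.\ $f(x)=\log|x|$) one has $\mathcal M_sf\equiv\infty$, so the estimate is vacuous; the paper instead uses the cancellation $T_N1=0$ and the arguments of \cite{MTX} to obtain \eqref{sharp}.
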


\begin{remark}
  From the conclusions  in Theorem \ref{Thm:PoissonLp}, for $f\in L^p(\real^{n}, \omega)$ with $\omega\in A_p$,  we can define $T f$ by the limit of $T_N f$ in $L^p$ norm
  $$T f(x)=\lim_{(N_1,N_2)\rightarrow (-\infty, +\infty)} T_N f(x),\quad \quad ~x\in \real^{n}.$$
 For more results related with the convergence of  $T_N f$, see Theorem \ref{Thm:ae}.
\end{remark}

In classical harmonic analysis,  if $f= \chi_{(0,1)}$ and $\mathcal{H}$ is the Hilbert transform, it is known  that   $\displaystyle  \frac1{r} \int_{-r}^0 \mathcal{H}(f)(x)dx\  \sim \log\frac{e}{r}$  as $ r \to 0^+$. In general, this is the growth of a singular integral applied to a bounded function  at the origin. The following theorem shows that if $f$ is a  bounded function, the growth of  $T^*f$ at the origin   is of the same order of a singular integral operator.
Some related results  about the local behavior of variation operators can be found in \cite{BCT}. One dimensional results about the variation of  convolution operators can be found in \cite{MTX}. And  one dimensional results about  differential transforms of one-sided fractional Poisson type operator sequence is proved in \cite{ZMT}.

The following theorem analyzes the local growth behavior of $T^*$  in $L^\infty$:

\begin{thm} \label{Thm:GrothLinfinity}
Let    $\{v_j\}_{j\in \mathbb Z}\in l^p(\mathbb Z)$ for some $1 \le p\le \infty.$
  Let $\{a_j\}_{j\in \mathbb Z}$ be any increasing sequence and  $T^*$  defined in (\ref{Formu:MaxSquareFun}). Then for every $f\in L^\infty(\mathbb{R}^n)$ with support in the unit ball $B=B(0, 1)$,  for any ball $B_r\subset B$ with $2r<1$, we have
    $$\frac{1}{|B_r|} \int_{B_r} \abs{T^* f (x)} dx\leq C\left(\log \frac{2}{r}\right)^{1/p'}\norm{v}_{l^p(\mathbb Z)}\|f\|_{L^\infty(\mathbb R^n)}.$$
In the statement above,  $\displaystyle p' = \frac{p}{p-1},$ and if $p=1$, $\displaystyle p'=\infty.$
\end{thm}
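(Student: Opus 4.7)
The plan is a two-stage reduction: peel off $\|v\|_{l^p(\ent)}$ via Hölder, then split $f$ into a piece localized on $2B_r$ and its complement, so that only the nonlocal piece produces the $\log(2/r)$ factor. Write $P_t=e^{-t(-\Delta)^\alpha}$ and let $k_t$ denote its convolution kernel. Hölder's inequality in $j$ with exponents $p,p'$ gives
\begin{equation*}
 |T_Nf(x)|\le\|v\|_{l^p(\ent)}\Bigl(\sum_{j=N_1}^{N_2}|P_{a_{j+1}}f(x)-P_{a_j}f(x)|^{p'}\Bigr)^{1/p'}.
\end{equation*}
Since every summand is bounded by $2\|f\|_{L^\infty}$, the interpolation $\|w\|_{l^{p'}}\le\|w\|_{l^\infty}^{1/p}\|w\|_{l^1}^{1/p'}$ improves this to
\begin{equation*}
 T^*f(x)\le C\|v\|_{l^p}\|f\|_{L^\infty}^{1/p}\Bigl(\sum_{j\in\ent}|P_{a_{j+1}}f(x)-P_{a_j}f(x)|\Bigr)^{1/p'},
\end{equation*}
and the inner $1$-variation is dominated by $\int_0^\infty|(-\Delta)^\alpha P_tf(x)|\,dt$ by the fundamental theorem of calculus. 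The key kernel input $\int_0^\infty|\partial_tk_t(z)|\,dt\le C|z|^{-n}$ follows from $|\partial_tk_t(z)|\le C(t^{1/(2\alpha)}+|z|)^{-(n+2\alpha)}$ after the change of variable $s=t^{1/(2\alpha)}$.

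Split $f=f_1+f_2$ with $f_1=f\chi_{2B_r}$ and $f_2=f-f_1$. For $x\in B_r$, every $y\in\operatorname{supp}f_2$ satisfies $|x-y|\ge r$, so
\begin{equation*}
 \int_0^\infty|(-\Delta)^\alpha P_tf_2(x)|\,dt\le C\|f\|_{L^\infty}\int_{r\le|z|\le 2}\frac{dz}{|z|^n}\le C\|f\|_{L^\infty}\log\frac{2}{r}.
\end{equation*}
Raising to the power $1/p'$ and combining with the Hölder reduction yields the pointwise bound $T^*f_2(x)\le C\|v\|_{l^p}\|f\|_{L^\infty}(\log(2/r))^{1/p'}$ on $B_r$, which upon integration gives the advertised estimate for this piece.

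The main obstacle is the local piece $f_1$: the pointwise $1$-variation $\int_0^\infty|(-\Delta)^\alpha P_tf_1(x)|\,dt$ diverges because $\int_{2B_r}dy/|x-y|^n=\infty$ for $x\in B_r$, so no pointwise bound is available and one must work with the $B_r$-average. The plan is to invoke a Plancherel-type argument: the monotonicity of $t\mapsto e^{-t|\xi|^{2\alpha}}$ gives $\sum_j|e^{-a_{j+1}|\xi|^{2\alpha}}-e^{-a_j|\xi|^{2\alpha}}|^2\le 1$, whence Cauchy-Schwarz in $j$ yields $T^*f(x)\le\|v\|_{l^2(\ent)}\,Sf(x)$ for the square function $Sf(x)=\bigl(\sum_j|P_{a_{j+1}}f(x)-P_{a_j}f(x)|^2\bigr)^{1/2}$, which is bounded on $L^2$ by $\|f\|_{L^2}$. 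Applied to $f_1$ this yields $\|T^*f_1\|_{L^2}\le C\|v\|_{l^2}|B_r|^{1/2}\|f\|_{L^\infty}$, and a further Cauchy-Schwarz on $B_r$ gives $|B_r|^{-1}\int_{B_r}T^*f_1\,dx\le C\|v\|_{l^p}\|f\|_{L^\infty}$ for $p\le 2$, since $\|v\|_{l^2}\le\|v\|_{l^p}$ on counting measure. For $p>2$ the same constant bound is reached by a direct splitting of the $j$-sum at the critical scale $a_j\sim r^{2\alpha}$, combining the $L^\infty$ cap on each difference for $a_j\lesssim r^{2\alpha}$ with the decaying kernel estimate $|\partial_tk_t|\le Ct^{-n/(2\alpha)-1}$ for $a_j\gtrsim r^{2\alpha}$. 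Since $(\log(2/r))^{1/p'}\ge 1$ for $2r<1$, this constant contribution is absorbed, and adding the local and nonlocal parts completes the proof.
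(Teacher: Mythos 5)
Your handling of the nonlocal piece $f_2$ is correct and is in substance the paper's own argument: H\"older in $j$ against $\norm{v}_{l^p(\ent)}$, the kernel estimate $\int_0^\infty\abs{\partial_t e^{-t(-\Delta)^\alpha}(z)}\,dt\le C\abs{z}^{-n}$ from Lemma \ref{Lem:heatL}, and the support restriction $r\le\abs{x-y}\le2$ producing the factor $\log(2/r)$; your $\ell^\infty$--$\ell^1$ interpolation of the inner sum is just a repackaging of the paper's H\"older-with-respect-to-the-kernel-measure plus Fubini step, and it even yields a pointwise bound on $B_r$. Your treatment of the local piece $f_1$ for $p\le2$ (Cauchy--Schwarz in $j$, the square function, Plancherel with $\sum_j\abs{e^{-a_{j+1}\abs{\xi}^{2\alpha}}-e^{-a_j\abs{\xi}^{2\alpha}}}^2\le1$, $\norm{v}_{l^2}\le\norm{v}_{l^p}$, then Cauchy--Schwarz on $B_r$) is also correct, and is more self-contained than the paper's, which at this point simply invokes the $L^2$ boundedness of the maximal operator $T^*$ from Theorem \ref{Thm:PoissonLp} (proved there under lacunarity).

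The genuine gap is the local piece for $p>2$, including $p=\infty$, which the statement covers. Your sketch --- split the $j$-sum at $a_j\sim r^{2\alpha}$, use the $L^\infty$ cap on each difference below that scale and kernel decay above it --- does not close. After H\"older in $j$ you must control $\bigl(\sum_j\abs{e^{-a_{j+1}(-\Delta)^\alpha}f_1(x)-e^{-a_j(-\Delta)^\alpha}f_1(x)}^{p'}\bigr)^{1/p'}$ with $p'<2$; for an arbitrary increasing sequence there may be infinitely many indices with $a_j\lesssim r^{2\alpha}$ (the sequence can accumulate at $0$, or even be bounded above), so the uniform bound $2\norm{f}_{L^\infty}$ per term gives no control --- you cannot count terms --- and for $p'<2$ this $\ell^{p'}$ quantity is not dominated by the square function (the inequality $\norm{\cdot}_{\ell^2}\le\norm{\cdot}_{\ell^{p'}}$ goes the wrong way), nor by the $1$-variation, which is exactly what diverges pointwise for $f_1$. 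The large-scale range $a_j\gtrsim r^{2\alpha}$ is indeed fine ($\ell^{p'}\le\ell^1$ there and $r^n\int_{r^{2\alpha}}^\infty t^{-n/2\alpha-1}dt\le C$), but the small-scale range is missing an idea. The paper avoids this entirely: it bounds the local piece by Cauchy--Schwarz on $B_r$ together with the $L^2(\real^n)$ boundedness of the maximal operator $T^*$ itself (Theorem \ref{Thm:PoissonLp}(a), using $\norm{v}_{\ell^\infty}\le\norm{v}_{\ell^p}$), i.e., it uses the full maximal theorem rather than a pointwise H\"older-in-$j$ majorant; that is the ingredient you need for $p>2$ (with the caveat that this step relies on Theorem \ref{Thm:PoissonLp}, whose proof assumes $\{a_j\}$ lacunary, so even the paper's route uses more than ``any increasing sequence'' there).
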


This article is organized as follows. In Section \ref{Sec:Laplacian}, by using Calder\'on-Zygmund theory,  we prove the uniform boundedness of the  operators $T_N.$ In Section \ref{Sec:maxi}, we give the proof of Theorem \ref{Thm:PoissonLp}.  And we prove Theorem \ref{Thm:GrothLinfinity} in the last section.

Throughout this article, the letters $C, c$ will denote  positive
constants which may change from one instance to another and depend on
the parameters involved. We will make a frequent use, without
mentioning it in relevant places, of the fact that for a positive
$A$ and a non-negative $a,$
$$\sup\limits_{t>0}t^a
e^{-At}=C_{a, A}<\infty.$$




\vskip 1cm


\section{Uniform $L_p$ boundedness of the operators $T_N$}\label{Sec:Laplacian}

In this section, we  will make some preparations to prove Theorem \ref{Thm:PoissonLp}.  In fact, we will prove the uniform boundedness of the operators $T_N.$ {The standard  Calder\'on-Zygmund theory will be a fundamental tool  in proving the $L^p$ boundedness of the operators  $T_N$. For this theory, the reader can see  some classical textbooks about harmonic analysis, for example, see \cite{Duo, Grafakos}. Nowadays it is well known that the fundamental ingredients in the theory are the $L^{p_0}(\mathbb{R}^n)$ boundedness for some $1<p_0\le \infty$ and the smoothness of the kernel of the operator. Even more, the constants that appear in the results only depend on the boundedness constant in $L^{p_0}(\mathbb{R}^n)$ and the constants related with the size and smoothness of the kernel.}

  In the following proposition, we present and prove  the $L^2$ boundedness of the operators $T_N$.
\begin{prop}\label{Thm:L2Estimate}
There is a constant $C>0$ depending  on $n$ and $\norm{v}_{\ell^\infty(\mathbb Z)}$  (not on $N$)  such that
 $$ \|T_N f \|_{L^2(\real^{n})}\leq C \|f \|_{L^2(\real^{n})}.$$
\end{prop}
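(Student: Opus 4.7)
The plan is to exploit the fact that each heat semigroup $e^{-t(-\Delta)^\alpha}$ is a Fourier multiplier with symbol $e^{-t|\xi|^{2\alpha}}$, so $T_N$ is itself a Fourier multiplier, and then apply Plancherel's theorem. The whole argument reduces to showing that the symbol is uniformly bounded in $N$ by $\|v\|_{\ell^\infty(\mathbb Z)}$.

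Concretely, writing out the Fourier transform of $T_Nf$ using the definition of $e^{-t(-\Delta)^\alpha}$ as a Fourier multiplier yields
$$\mathcal F(T_N f)(\xi)=m_N(\xi)\,\mathcal F(f)(\xi),\qquad m_N(\xi)=\sum_{j=N_1}^{N_2}v_j\bigl(e^{-a_{j+1}|\xi|^{2\alpha}}-e^{-a_j|\xi|^{2\alpha}}\bigr).$$
Fix $\xi\in\mathbb R^n$ and set $s=|\xi|^{2\alpha}\ge 0$. Since $\{a_j\}_{j\in\mathbb Z}$ is increasing, each $e^{-a_j s}-e^{-a_{j+1}s}$ is nonnegative. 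Hence bounding $|v_j|\le \|v\|_{\ell^\infty(\mathbb Z)}$ and telescoping gives
$$|m_N(\xi)|\le\|v\|_{\ell^\infty(\mathbb Z)}\sum_{j=N_1}^{N_2}\bigl(e^{-a_j s}-e^{-a_{j+1}s}\bigr)=\|v\|_{\ell^\infty(\mathbb Z)}\bigl(e^{-a_{N_1}s}-e^{-a_{N_2+1}s}\bigr)\le\|v\|_{\ell^\infty(\mathbb Z)},$$
a bound which is uniform in both $\xi$ and $N$.

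Combining this with Plancherel's identity yields
$$\|T_N f\|_{L^2(\mathbb R^n)}=\|m_N\,\mathcal F(f)\|_{L^2(\mathbb R^n)}\le\|v\|_{\ell^\infty(\mathbb Z)}\|f\|_{L^2(\mathbb R^n)},$$
which is the desired conclusion. There is no real obstacle: the monotonicity of $\{a_j\}$ turns the apparent difficulty of controlling a sum whose length depends on $N$ into a telescoping estimate, and the factor $n$ enters only through Plancherel's theorem (so the constant $C$ depends on $n$ and $\|v\|_{\ell^\infty(\mathbb Z)}$ alone). This $L^2$ bound is exactly the ``$L^{p_0}$ endpoint with $p_0=2$'' that feeds into the Calder\'on--Zygmund machinery announced just above the statement of the proposition.
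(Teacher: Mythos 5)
Your proof is correct and takes essentially the same approach as the paper: Plancherel's theorem plus a bound on the multiplier $m_N(\xi)$ that is uniform in $N$ and $\xi$. The only cosmetic difference is in how that bound is obtained --- the paper writes each difference as $\int_{a_j}^{a_{j+1}}\partial_t e^{-t|\xi|^{2\alpha}}\,dt$ and bounds the resulting sum by $\int_0^\infty |\xi|^{2\alpha}e^{-t|\xi|^{2\alpha}}\,dt=1$, whereas you telescope directly using the monotonicity of $t\mapsto e^{-t|\xi|^{2\alpha}}$; both exploit the same positivity and give the constant $\|v\|_{\ell^\infty(\mathbb Z)}$.
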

\begin{proof}
Let $f\in L^2(\real^{n})$. Using  the   Plancherel  theorem, we have
\begin{align*}
 \norm{T_N f }^2_{L^2(\real^{n})} & = \norm{\sum_{j=N_1}^{N_2} v_j\left(e^{-a_{j+1}(-\Delta)^\alpha} f -e^{-a_{j}(-\Delta)^\alpha}f\right)}^2_{L^2(\real^{n})}\\
 &=  \int_{\mathbb{R}^n} \Big\{\sum_{j=N_1}^{N_2} v_j\left(e^{-a_{j+1}(-\Delta)^\alpha} f(x) -e^{-a_{j}(-\Delta)^\alpha}f(x)\right) \Big\}^2 dx \\
  &=
 \int_{\mathbb{R}^n} \left(\mathcal F{\Big\{\sum_{j=N_1}^{N_2} v_j\left(e^{-a_{j+1}(-\Delta)^\alpha} f(\cdot) -e^{-a_{j}(-\Delta)^\alpha}f(\cdot)\right) \Big\}}(\xi)\right)^2 d\xi\\
  & =  \int_{\mathbb{R}^n} \Big\{\sum_{j=N_1}^{N_2} v_j \int_{a_j}^{a_{j+1}}\partial_t \mathcal F\left(e^{-t(-\Delta)^\alpha} f \right)(\xi)dt  \Big\}^2 d\xi\\
 &\le
   C\norm{v}^2_{\ell^\infty(\mathbb Z)}
   \int_{\mathbb{R}^n} \Big\{\sum_{j=N_1}^{N_2} \Big|\int_{a_j}^{a_{j+1}}\partial_t \mathcal F\left(e^{-t(-\Delta)^\alpha} f\right)(\xi) dt\Big|  \Big\}^2 d\xi\\
     &=   C_v
   \int_{\mathbb{R}^n} \Big\{\sum_{j=N_1}^{N_2}\int_{a_j}^{a_{j+1}}\abs{\xi}^{2\alpha} e^{-t\abs{\xi}^{2\alpha}} \big|\mathcal F( f)(\xi)\big|dt  \Big\}^2 d\xi\\
   &\le    C_v
   \int_{\mathbb{R}^n} \Big\{\sum_{j=N_1}^{N_2} \int_{a_j}^{a_{j+1}}\abs{\xi}^{2\alpha} e^{-t\abs{\xi}^{2\alpha}} dt|\mathcal F( f)(\xi)|  \Big\}^2 d\xi\\ &\le  C_v
   \int_{\mathbb{R}^n} \Big\{\Big|\int_0^\infty\abs{\xi}^{2\alpha} e^{-t\abs{\xi}^{2\alpha}} dt\Big||\mathcal F( f)(\xi)|  \Big\}^2 d\xi\\
   & \le  C_{v,n} \|f\|_{L^2(\real^n)}^2.
\end{align*}
Then the proof of the theorem is complete.
\end{proof}

\begin{lem}\label{Lem:heatL}
There exists a constant $C>0$ depending on $n$ and $\alpha$ such that
\begin{itemize}
\item[(i)] \begin{equation*}\label{1}
0<{e^{-t(-\Delta)^\alpha} (x)}\leq C \frac t{(t^{\frac 1{2\alpha}}+|x|)^{n+2\alpha}},\quad x\in\real^n,~t>0,
\end{equation*}

\item[(ii)]
\begin{equation*}
 \abs{ \partial_t e^{-t(-\Delta)^\alpha} (x)}\leq C \frac 1{(t^{\frac 1{2\alpha}}+|x|)^{n+2\alpha}},\quad x\in\real^n,~t>0,
\end{equation*}
\item[(iii)]
\begin{equation*}
 \abs{ \nabla_x e^{-t(-\Delta)^\alpha} (x)}\leq C \frac 1{(t^{\frac 1{2\alpha}}+|x|)^{n+1}},\quad x\in\real^n,~t>0,
\end{equation*}
 and
 \item[(iv)]
\begin{equation*}
{ \abs{\partial_t \nabla_x e^{-t(-\Delta)^\alpha} (x)}\leq C \frac {1}{(t^{\frac 1{2\alpha}}+|x|)^{n+2\alpha+1}},\quad x\in\real^n,~t>0.}
\end{equation*}
 \end{itemize}
\end{lem}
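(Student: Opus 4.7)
My plan is to reduce the four pointwise kernel estimates to size bounds on the time-$1$ kernel $\Psi(y):=e^{-(-\Delta)^\alpha}(y)$ and its first two derivatives by parabolic scaling, and then to verify those time-$1$ bounds via the Bochner subordination formula.

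Starting from the Fourier representation and changing variables $\xi\mapsto t^{-1/(2\alpha)}\xi$, I would first record the scaling identity
$$p_t(x):=e^{-t(-\Delta)^\alpha}(x)=t^{-n/(2\alpha)}\,\Psi\bigl(t^{-1/(2\alpha)}x\bigr),\qquad y:=t^{-1/(2\alpha)}x.$$
Differentiating in $t$ and in $x$ then yields
\begin{align*}
\partial_tp_t(x)&=-\frac{1}{2\alpha t}\,t^{-n/(2\alpha)}\bigl[n\Psi(y)+y\cdot\nabla\Psi(y)\bigr],\\
\nabla_xp_t(x)&=t^{-(n+1)/(2\alpha)}\,\nabla\Psi(y),\\
\partial_t\nabla_xp_t(x)&=-\frac{1}{2\alpha t}\,t^{-(n+1)/(2\alpha)}\bigl[(n+1)\nabla\Psi(y)+\nabla^2\Psi(y)\,y\bigr].
\end{align*}
Because $1+|y|=t^{-1/(2\alpha)}(t^{1/(2\alpha)}+|x|)$, after collecting the powers of $t$ each of (i)--(iv) becomes equivalent to one of the time-$1$ bounds
$$\Psi(y)\lesssim (1+|y|)^{-(n+2\alpha)},\qquad |y\cdot\nabla\Psi(y)|\lesssim (1+|y|)^{-(n+2\alpha)},$$
$$|\nabla\Psi(y)|\lesssim (1+|y|)^{-(n+1)},\qquad |\nabla\Psi(y)|+|\nabla^2\Psi(y)\,y|\lesssim (1+|y|)^{-(n+2\alpha+1)}.$$

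The first is (i), which I would quote from \cite{MYZ}. For the remaining three I would invoke the Bochner subordination representation
$$\Psi(y)=\int_0^\infty (4\pi s)^{-n/2}\,e^{-|y|^2/(4s)}\,\eta_\alpha(s)\,ds,$$
where $\eta_\alpha$ is the density of the $\alpha$-stable subordinator, which satisfies $\eta_\alpha(s)\lesssim s^{-1-\alpha}$ as $s\to\infty$ and has super-polynomial decay as $s\to 0^+$. Differentiating under the integral sign brings down factors of $y/s$ multiplying a Gaussian kernel; splitting the $s$-integral at $s\sim |y|^{2}$ and using Gaussian decay on the small-$s$ piece together with the tail of $\eta_\alpha$ on the large-$s$ piece will produce each of the required polynomial bounds on $\nabla\Psi$, $y\cdot\nabla\Psi$, and $\nabla^2\Psi\,y$. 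Substituting these back into the scaling identities and reassembling the powers of $t$ then recovers (i)--(iv).

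The genuinely technical step is extracting the pointwise decay of $\nabla\Psi$ and of $\nabla^2\Psi\,y$ with the stated rates. Because $|\xi|^{2\alpha}$ is non-smooth at $\xi=0$ for $\alpha\in(0,1)$, iterated integration by parts in the raw Fourier integral rapidly produces non-integrable singularities at the origin, which is precisely why the subordination detour is the cleanest route: it replaces the non-smooth symbol by a probabilistic average of smooth Gaussian heat kernels against the well-understood stable density $\eta_\alpha$. Once the time-$1$ bounds on $\Psi$ and its derivatives are in place, everything else is routine bookkeeping on the scaling identities displayed above.
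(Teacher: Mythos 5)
Your plan is correct, but it is a genuinely different route from the paper's: the paper does not prove these estimates at all, it simply quotes them, (i) from \cite[Lemma 5.4]{Grigo} and (ii)--(iv) from \cite[Lemmas 2.1--2.2, Remark 2.1]{MYZ}, whose proofs are Fourier-analytic (the ``invariant derivative'' technique mentioned in the introduction). You instead give a self-contained scheme. Your scaling identity $p_t(x)=t^{-n/(2\alpha)}\Psi(t^{-1/(2\alpha)}x)$ and the three differentiation formulas are correct, and since $1+|y|=t^{-1/(2\alpha)}(t^{1/(2\alpha)}+|x|)$ the bookkeeping does reduce (i)--(iv) exactly to the four time-one bounds you list. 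Those bounds in turn follow from Bochner subordination as you sketch: differentiating under the integral (justified by dominated convergence, using the superpolynomial decay of $\eta_\alpha$ at $0$), splitting at $s\sim|y|^2$, using Gaussian decay $e^{-|y|^2/(4s)}\le C_M(s/|y|^2)^M$ on the small-$s$ piece and $\eta_\alpha(s)\lesssim s^{-1-\alpha}$ on the large-$s$ piece; the exponents work out to $|\nabla\Psi(y)|+|\nabla^2\Psi(y)\,y|\lesssim(1+|y|)^{-(n+2\alpha+1)}$, which is what (iv) needs and is even stronger than the $(1+|y|)^{-(n+1)}$ required for (iii), and it also gives $|y\cdot\nabla\Psi(y)|\lesssim(1+|y|)^{-(n+2\alpha)}$ for (ii). What the paper's citation buys is brevity; what your route buys is a self-contained proof valid for every $\alpha\in(0,1)$ that in fact yields sharper gradient decay than stated. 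One small remark: the upper bound and the strict positivity in (i) also drop out of the same subordination formula (a positive Gaussian integrated against the nonnegative, nontrivial stable density), so you need not quote \cite{MYZ} for (i) either if you want the argument fully self-contained.
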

\begin{proof}
For (i), it was proved in \cite[Lemma 5.4]{Grigo}. For the other estimations, we can get the proof  easily by using the results in \cite[Lemmas 2.1--2.2, Remark 2.1]{MYZ}.
\end{proof}

The following proposition  contains the size description of the kernel and the smoothness estimates that are required in the Calder\'on-Zygmund theory.
\begin{prop}\label{Thm:KernelEst}
Let $f\in L^p(\mathbb{R}^n), 1\le p \le \infty$. Then
$$T_Nf(x) = \int_{\mathbb{R}^n}  K_N(y)f(x-y)dy$$
with
\begin{equation*}\label{kernel}
  K_N(y) =  \sum_{j=N_1}^{N_2}v_j \left(e^{-a_{j+1}(-\Delta)^\alpha} (y)-e^{-a_{j}(-\Delta)^\alpha} (y)\right).
\end{equation*}
Moreover, there exists  constant $C>0$ depending  on $n, \alpha$ and $\norm{v}_{\ell^\infty(\mathbb Z)}$(not on $N$) such that, for any $y\neq 0,$
\begin{enumerate}[\indent i)]
  \item $\displaystyle   | K_N(y)|\leq \frac{C}{|y|^{n}}$,
  \item $\displaystyle   |\nabla_y  K_N(y)| \leq \frac{C}{|y|^{n+1}}$.
\end{enumerate}
\end{prop}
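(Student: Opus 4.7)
The representation of $T_N f$ as convolution with $K_N$ follows immediately from the fact that each $e^{-t(-\Delta)^\alpha}$ is a convolution operator, so the plan concentrates on the two size/smoothness estimates.

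The key device is to rewrite each telescoping difference as an integral via the fundamental theorem of calculus:
\begin{equation*}
e^{-a_{j+1}(-\Delta)^\alpha}(y) - e^{-a_{j}(-\Delta)^\alpha}(y) = \int_{a_j}^{a_{j+1}} \partial_t e^{-t(-\Delta)^\alpha}(y)\,dt.
\end{equation*}
Plugging into the definition of $K_N$, taking absolute values, pulling out $\|v\|_{\ell^\infty}$, and using that the intervals $[a_j,a_{j+1}]$ are pairwise disjoint and contained in $(0,\infty)$, I would bound
\begin{equation*}
|K_N(y)| \le \|v\|_{\ell^\infty(\mathbb Z)}\int_0^\infty \bigl|\partial_t e^{-t(-\Delta)^\alpha}(y)\bigr|\,dt.
\end{equation*}
The crucial step is that the bound is independent of $N$ precisely because the sum over $j$ collapses into a single integral over $(0,\infty)$.

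Now I would apply part (ii) of Lemma \ref{Lem:heatL} to get
\begin{equation*}
|K_N(y)| \le C\|v\|_{\ell^\infty(\mathbb Z)}\int_0^\infty \frac{dt}{(t^{1/(2\alpha)}+|y|)^{n+2\alpha}}.
\end{equation*}
The substitution $t = (|y|s)^{2\alpha}$ (so $dt = 2\alpha|y|^{2\alpha} s^{2\alpha-1}\,ds$) turns this into
\begin{equation*}
|K_N(y)| \le \frac{C}{|y|^{n}} \int_0^\infty \frac{s^{2\alpha-1}}{(1+s)^{n+2\alpha}}\,ds,
\end{equation*}
and the last integral converges (at $0$ since $2\alpha-1 > -1$, at $\infty$ since $n+1>0$), yielding (i).

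For (ii), the argument is identical: differentiating in $y$ under the integral sign commutes with $\partial_t$, so
\begin{equation*}
|\nabla_y K_N(y)| \le \|v\|_{\ell^\infty(\mathbb Z)}\int_0^\infty \bigl|\partial_t \nabla_y e^{-t(-\Delta)^\alpha}(y)\bigr|\,dt,
\end{equation*}
and part (iv) of Lemma \ref{Lem:heatL} together with the same substitution gives $|\nabla_y K_N(y)| \le C/|y|^{n+1}$. There is no real obstacle here; the only point to be careful about is justifying the interchange of $\nabla_y$ with the integration/summation, which is routine from the pointwise estimate (iv) for fixed $y\neq 0$. The whole proof is a clean reduction to the pointwise heat-kernel bounds already recorded in Lemma \ref{Lem:heatL}.
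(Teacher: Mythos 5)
Your proposal is correct and follows essentially the same route as the paper: write each difference as $\int_{a_j}^{a_{j+1}}\partial_t e^{-t(-\Delta)^\alpha}(y)\,dt$, pull out $\norm{v}_{\ell^\infty(\mathbb Z)}$, collapse the disjoint intervals into a single integral over $(0,\infty)$, and apply parts (ii) and (iv) of Lemma \ref{Lem:heatL}; your explicit substitution $t=(|y|s)^{2\alpha}$ just spells out the computation the paper states directly. (Only a cosmetic slip: convergence at infinity needs the exponent $n+1>1$, which of course holds since $n\ge 1$.)
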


\begin{proof}
{\it i)}~Regarding  the size condition for the kernel, by Lemma \ref{Lem:heatL} we have
\begin{align*}
  | K_N (y)| &\leq  \sum_{j=N_1}^{N_2}\abs{v_j} \abs{e^{-a_{j+1}(-\Delta)^\alpha} (y)-e^{-a_{j}(-\Delta)^\alpha} (y)}\\
  &\le C_{n, v} \sum_{j=-\8}^{\8} \left|\int_{a_j}^{a_{j+1}}\partial_t e^{-t(-\Delta)^\alpha}(y) dt\right|\\
   &\le C_{n, v,\alpha}\int_{0}^{\infty}\frac 1{(t^{\frac 1{2\alpha}}+|y|)^{n+2\alpha}} dt  = C_{n, v,\alpha}{1 \over  {|y|^{n}}}.
\end{align*}

{\it ii)}
 With a similar argument as above in $i)$, by Lemma \ref{Lem:heatL}  we get
\begin{align*}
  |\nabla_y  K_N (y)| &\leq  \sum_{j=N_1}^{N_2}\abs{v_j} \abs{\nabla_y e^{-a_{j+1}(-\Delta)^\alpha}(y)-\nabla_y e^{-a_{j}(-\Delta)^\alpha}(y)}\\
  &\le C_{n, v} \sum_{j=-\8}^{\8}\int_{a_j}^{a_{j+1}}\left|\partial_t \nabla_y e^{-t(-\Delta)^\alpha}(y) \right|dt\\
   &\le C_{n, v,\alpha}\int_{0}^{\infty}\frac {1}{(t^{\frac 1{2\alpha}}+|y|)^{n+2\alpha+1}} dt  = C_{n, v,\alpha}{1 \over  {|y|^{n+1}}}.
\end{align*}
The proof of the  proposition is complete.
\end{proof}

Then, we have the following theorem about the uniform boundedness of $T_N$:

\begin{thm}\label{Thm:BMO} Let $\{a_j\}_{j\in \mathbb Z}$ be a positive increasing sequence and $\{T_N\}_{N=(N_1,N_2)}$ be  the operator $T_N$  defined in (\ref{Formu:FinSquareFun}). We have the following statements:
\begin{enumerate}[(a)]

\item for any $1<p<\infty$ and $\omega\in A_p$,  there exists a constant $C$ depending  on $n, p,\omega$, $\norm{v}_{l^\infty(\mathbb Z)}$ and $\alpha$ such that
 $$\norm{T_N f}_{L^p(\mathbb R^{n}, \omega)}\leq C\norm{f}_{L^p(\mathbb R^{n}, \omega)},$$
 for all functions $f\in L^p(\real^{n}, \omega).$
    \item for any  $\omega\in A_1$, there exists a constant $C$ depending  on $n, \omega$,  $\norm{v}_{\ell^\infty(\mathbb Z)}$  and  $\alpha$  such that
 $$\omega\left({\{x\in \real^{n}:\abs{T_N f(x)}>\sigma\}}\right) \le C\frac{1}{\sigma}\norm{f}_{L^1(\mathbb R^{n}, \omega)}, \quad \sigma>0,$$
for all functions $f\in L^1(\real^{n}, \omega).$

    \item  there exists a constant $C$ depending on $n$, $\norm{v}_{\ell^\infty(\mathbb Z)}$ and $\alpha$ such that
$$\norm{T_N f}_{BMO(\mathbb R^{n})}\leq C\norm{f}_{L^\infty(\mathbb R^{n})},$$
for all functions $f\in L^\infty(\real^{n}).$
\item  there exists a constant $C$ depending  on $n$, $\norm{v}_{\ell^\infty(\mathbb Z)}$ and $\alpha$ such that
     $$\norm{T_N f}_{BMO(\real^n)}\le C\norm{f}_{BMO(\real^n)}.$$

\end{enumerate}

The constants $C$ appeared above all are independent of $N.$
\end{thm}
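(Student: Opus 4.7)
The proof will be a routine application of the Calderón--Zygmund theory to each $T_N$, using the two inputs already assembled: Proposition \ref{Thm:L2Estimate} gives the $L^2$ boundedness, and Proposition \ref{Thm:KernelEst} shows that the convolution kernel $K_N$ satisfies the standard size bound $|K_N(y)|\le C|y|^{-n}$ and the gradient bound $|\nabla_y K_N(y)|\le C|y|^{-n-1}$. The mean value theorem then turns the gradient estimate into the Hörmander smoothness condition. Crucially, all three constants are independent of $N$, so every operator norm bound produced by the Calderón--Zygmund machinery will be uniform in $N$, which is exactly what is required.

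For parts (a) and (b), I would simply cite the classical weighted Calderón--Zygmund theorems (see, e.g., \cite[Chapter~7]{Duo} or \cite{Grafakos}): any convolution operator with standard kernel which is bounded on $L^2(\mathbb{R}^n)$ is automatically bounded on $L^p(\mathbb{R}^n,\omega)$ for $\omega\in A_p$, $1<p<\infty$, and of weighted weak type $(1,1)$ for $\omega\in A_1$, with constants depending only on $n$, $p$, $[\omega]_{A_p}$, the $L^2$ operator norm and the kernel constants. Feeding in the uniform constants from Propositions \ref{Thm:L2Estimate} and \ref{Thm:KernelEst} yields (a) and (b).

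For parts (c) and (d), the $L^\infty\to BMO$ and $BMO\to BMO$ estimates are also classical consequences of the Calderón--Zygmund structure and proceed in the same spirit. Fix a ball $B=B(x_0,r)$ and decompose $f = (f-f_{2B})\chi_{2B} + (f-f_{2B})\chi_{(2B)^c} + f_{2B}$. The constant $f_{2B}$ is annihilated by the oscillation. The $L^2$ boundedness of $T_N$ applied to the first piece, followed by Cauchy--Schwarz on $B$, controls its contribution to the mean oscillation by $\|f\|_{L^\infty}$ (resp.\ by $\|f\|_{BMO}$ through the standard estimate $\tfrac{1}{|2B|}\int_{2B}|f-f_{2B}|^2\le C\|f\|_{BMO}^2$). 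The second piece is handled pointwise: for $x\in B$ one subtracts $T_N((f-f_{2B})\chi_{(2B)^c})(x_0)$ from $T_N((f-f_{2B})\chi_{(2B)^c})(x)$ and uses the smoothness estimate $|\nabla_y K_N(y)|\le C|y|^{-n-1}$ together with the dyadic annulus decomposition of $(2B)^c$ and the $BMO$ growth $|f_{2^kB}-f_{2B}|\le Ck\|f\|_{BMO}$ to obtain an absolutely convergent series bounded by a constant times the relevant norm of $f$.

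The main obstacle I anticipate is the correct interpretation of $T_N f$ when $f\in L^\infty$ or $f\in BMO$, since the kernel decay $|K_N(y)|\sim |y|^{-n}$ is not sufficient to make $\int K_N(x-y)f(y)\,dy$ absolutely convergent at infinity. The standard remedy, which I would carry out explicitly, is to define $T_N f$ modulo constants by the subtraction trick above, or equivalently by replacing $K_N(x-y)$ with $K_N(x-y)-K_N(x_0-y)$; the smoothness of $K_N$ makes the resulting integral absolutely convergent against any $BMO$ function. Once this point is handled, parts (c) and (d) reduce to the routine estimates sketched above, with all constants independent of $N$.
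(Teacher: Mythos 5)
Your treatment of (a) and (b) coincides with the paper's: both feed the uniform $L^2$ bound of Proposition \ref{Thm:L2Estimate} and the size and smoothness estimates of Proposition \ref{Thm:KernelEst} into the classical weighted Calder\'on--Zygmund theorems, and the uniformity in $N$ comes for free since those constants do not depend on $N$. Your oscillation estimates for (c) and (d) are also the standard ones and would go through; your sketch of (d) is essentially the ``known argument'' the paper invokes through the identity $T_N1=0$ (see \cite{MTX}).

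The one genuine problem is the ``main obstacle'' you isolate, which is misdiagnosed, and the remedy you propose is unnecessary and, as stated, proves an estimate for a modified operator rather than for $T_N$ itself. The bound $|K_N(y)|\le C|y|^{-n}$ is only the $N$-uniform estimate needed for the Calder\'on--Zygmund constants; for each fixed $N$ the kernel is a \emph{finite} sum of heat-kernel differences, and by Lemma \ref{Lem:heatL}(i) each summand is dominated by $C\,a_{j+1}\,(a_j^{1/2\alpha}+|y|)^{-(n+2\alpha)}$, so $K_N\in L^1(\mathbb{R}^n)$, decays like $|y|^{-n-2\alpha}$, and satisfies $\int_{\mathbb{R}^n}K_N=0$. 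Hence $\int K_N(x-y)f(y)\,dy$ is absolutely convergent for every $f\in L^\infty(\mathbb{R}^n)$, with no need to interpret $T_Nf$ modulo constants; and for $f\in BMO(\mathbb{R}^n)$ the paper checks finiteness directly: writing $f=(f-f_B)\chi_{B^*}+(f-f_B)\chi_{(B^*)^c}+f_B$, the first piece is integrable, the second converges because the decay $(t^{1/2\alpha}+|x-y|)^{-(n+2\alpha)}$ beats the logarithmic growth $\frac1{|2^kB|}\int_{2^kB}|f-f_B|\le Ck\|f\|_{BMO(\mathbb{R}^n)}$, and the third is annihilated since $e^{-a_j(-\Delta)^\alpha}$ preserves constants, whence $T_N1=0$. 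If you insist on redefining $T_Nf$ via the difference kernel $K_N(x-y)-K_N(x_0-y)$, you still must reconcile this with the operator of \eqref{Formu:FinSquareFun}, and that reconciliation requires exactly the absolute convergence you claimed fails; so the finiteness argument above has to appear in any complete proof. With that point repaired, your argument for (c) and (d) matches the paper's.
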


\begin{proof}
 Previously, we have remarked  that the constants  in the $L^p$ boundedness only depend on the initial constant in $L^{p_0}(\mathbb{R}^n)$(in our case $p_0=2$), the size constant and smoothness constant of the kernel.  Hence the uniform boundedness of the operators $T_N$ in $L^p(\mathbb{R}^n)$ spaces is a direct consequence of the   Calder\'on-Zygmund theory. The finiteness of $T_N$ for functions in  $L^\infty(\real^n)$ is obvious, since for each $N$, $ K_N$ is an integrable function.
  On the other hand, if $f\in BMO(\real^n)$, we can proceed as follows. Let $B=B(x_0, r_0)$ and $B^*=B(x_0, 2r_0)$ with some $x_0\in \real^n$ and $r_0>0$. We decompose $f$ to  be
$$f=(f-f_B)\chi_{B^*}+(f-f_B)\chi_{(B^*)^c}+f_B=:f_1+f_2+f_3.$$
The function $f_1$ is integrable, hence  $T_N f_1(x)<\infty,$ $a.e.\ x\in \real^n.$ For $T_N f_2$, we note that, for any $x\in B$ and $t>0$,
\begin{align*}
&e^{-t(-\Delta)^{\alpha}} f_2(x)= \int_{\real^n} e^{-t(-\Delta)^{\alpha}}(x-y) f_2 (y)dy\\
& \le C \sum_{k=1}^\infty\int_{2^kr_0<|x_0-y|\le 2^{k+1}r_0}
           \frac t{(t^{\frac 1{2\alpha}}+|x-y|)^{n+2\alpha}}\abs{f(y)-f_B}dy\\
&\le Ct\sum_{k=1}^\infty{(2^kr_0)}^{-2\alpha}{1\over {(2^kr_0)}^{n}}\int_{|x_0-y|\le 2^{k+1}r_0} \abs{f(y)-f_B}dy\\
&\le  Ct  \sum_{k=1}^\infty{(2^kr_0)}^{-2\alpha}(1+2k)\norm{f}_{BMO(\real^n)}<\infty.
\end{align*}
So, $e^{-t(-\Delta)^{\alpha}} f_2(x)$ is finite for any $x\in B$ and $t>0.$ Since $T_N f_2(x)$ is a finite summation and $x_0, r_0$ is arbitrary, $T_N f_2(x)<\infty$ $a.e.$ $x\in \real^n.$
Finally we note that $T_N f_3(x)\equiv 0$, since $e^{{- a_j(-\Delta)^{\alpha}}} f_3\equiv f_B$ for any $j\in \mathbb Z.$
Hence, $T_N f(x)<\infty$ $a.e.$ $x\in \real^n.$ Then, by Propositions \ref{Thm:L2Estimate} and  \ref{Thm:KernelEst}  we  get the proof of part $(c)$ of   Theorem \ref{Thm:BMO}. To get $(d)$, since $T_N1=0$, the known arguments give the conclusion, see \cite{MTX}.
\end{proof}

\section{Boundedness of the maximal operator $T^*$}\label{Sec:maxi}

In this section, we will give the proof of Theorem \ref{Thm:PoissonLp} related to the boundedness of the maximal operator $T^*$. The next lemma,  parallel to  Proposition 3.2 in \cite{BLMMDT}(also Proposition 3.1 in \cite{ZMT}), shows that, without lost of generality, we may assume that
\begin{equation}\label{equ:lacunary}
1<\lambda \leq {a_{j+1} \over a_j}\leq \lambda^2, \quad j\in \mathbb Z.
\end{equation}

\begin{lem}\label{Prop:lacunary}
Given a $\lambda$-lacunary sequence $\{a_j\}_{j\in \mathbb Z}$ and a multiplying sequence $\{v_j\}_{j\in \mathbb Z}\in \ell^\infty(\mathbb Z)$, we can define a $\lambda$-lacunary sequence $\{\eta_j\}_{j\in \mathbb Z}$ and $\{\omega_j\}_{j\in \mathbb Z}\in \ell^\infty(\mathbb Z)$ verifying the following properties:
\begin{enumerate}[(i)]
\item $1<\lambda \leq \eta_{j+1}/\eta_j\leq \lambda^2,\quad \norm{\{\omega_j\}}_{\ell^\infty(\mathbb Z)}=\norm{\{v_j\}}_{\ell^\infty(\mathbb Z)}$.
\item For all $N=(N_1, N_2)$, there exists $N'=(N_1', N_2')$ with $T_N=\tilde{T}_{N'},$
where $\tilde{T}_{N'}$ is the operator defined in \eqref{Formu:FinSquareFun} for the new sequences $\{\eta_j\}_{j\in \ent}$ and $\{\omega_j\}_{j\in \ent}.$
\end{enumerate}
\end{lem}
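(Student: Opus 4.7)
The plan is to refine the original sequence $\{a_j\}$ by inserting extra points between consecutive terms whenever the ratio $a_{j+1}/a_j$ exceeds $\lambda^2$, and to assign the constant value $v_j$ to the new multiplier on each inserted block. The inserted differences then telescope, so $T_N f$ is preserved exactly, while the refined sequence satisfies the two-sided lacunarity in (i).

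More precisely, for each $j\in\mathbb Z$ set $r_j := a_{j+1}/a_j\ge \lambda$ and choose an integer $m_j\ge 1$ with $\lambda^{m_j}\le r_j\le \lambda^{2m_j}$; such an $m_j$ exists because the interval $[\log_\lambda(r_j)/2,\log_\lambda(r_j)]$ always contains a positive integer when $r_j\ge\lambda$. Define geometrically interpolated points
$$b_j^{(i)} := a_j\, r_j^{i/m_j},\quad i=0,1,\dots,m_j,$$
so that $b_j^{(0)}=a_j$, $b_j^{(m_j)}=a_{j+1}$, and each consecutive ratio equals $r_j^{1/m_j}\in[\lambda,\lambda^2]$. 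Next I would concatenate the blocks $\bigl(b_j^{(0)},\dots,b_j^{(m_j-1)}\bigr)$ in order of $j$ into one bi-infinite sequence $\{\eta_k\}_{k\in\mathbb Z}$, using indices $k(0):=0$ and $k(j+1):=k(j)+m_j$ (extended recursively to negative $j$), so that $\eta_{k(j)}=a_j$ for every $j$. Property (i) is then immediate from the construction. Finally, put $\omega_k := v_j$ whenever $k\in[k(j),k(j+1))$; clearly $\|\omega\|_{\ell^\infty(\mathbb Z)}=\|v\|_{\ell^\infty(\mathbb Z)}$.

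To verify (ii), given $N=(N_1,N_2)$ set $N':=(k(N_1),k(N_2+1)-1)$. Grouping the sum defining $\tilde{T}_{N'}f$ according to the blocks and telescoping within each block gives
\begin{align*}
\tilde{T}_{N'}f &= \sum_{j=N_1}^{N_2} v_j\sum_{i=0}^{m_j-1}\bigl(e^{-b_j^{(i+1)}(-\Delta)^\alpha}f - e^{-b_j^{(i)}(-\Delta)^\alpha}f\bigr)\\
&= \sum_{j=N_1}^{N_2} v_j\bigl(e^{-a_{j+1}(-\Delta)^\alpha}f - e^{-a_j(-\Delta)^\alpha}f\bigr) = T_N f.
\end{align*}
There is no serious analytical difficulty here; the only (mild) obstacle is the index bookkeeping needed to ensure that $\{\eta_k\}$ is correctly indexed by all of $\mathbb Z$, which is handled by the recursive extension from $k(0)=0$ in both directions.
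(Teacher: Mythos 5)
Your proof is correct and follows essentially the same strategy as the paper: refine $\{a_j\}$ by inserting intermediate points between $a_j$ and $a_{j+1}$ so that consecutive ratios lie in $[\lambda,\lambda^2]$, assign the constant weight $v_j$ on each inserted block, and recover $T_N=\tilde T_{N'}$ by telescoping within blocks. The only (immaterial) difference is your choice of points --- an equal geometric subdivision $a_j r_j^{i/m_j}$ rather than the paper's successive insertion of $\lambda a_j,\lambda^2 a_j,\dots$ until the remaining ratio drops below $\lambda^2$ --- and your existence check for $m_j$ is valid.
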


\begin{proof} We  follow closely the ideas in the proof
of Proposition 3.2 in \cite{BLMMDT}. We include them here for completeness.

Let $\eta_0=a_0$, and let us construct $\eta_j$ for positive $j$ as follows(the argument for
negative $j$ is analogous). If $\lambda^2\ge {a_1/ a_0}\ge \lambda,$ define $\eta_1=a_1$. In the opposite case where
$a_1/a_0>\lambda^2,$  let $\eta_1=\lambda a_0.$ It verifies $\lambda^2\ge \eta_1/\eta_0=\lambda\ge \lambda.$ Further, $a_1/\eta_1\ge \lambda^2 a_0/\lambda a_0=\lambda.$  Again, if $a_1/\eta_1\le \lambda^2,$ then $\eta_2=a_1.$ If this is not the case, define $\eta_2=\lambda^2 a_0\le a_1$.  By
the same calculations as before, $\eta_0, \eta_1, \eta_2$ are part of a lacunary sequence satisfying \eqref{equ:lacunary}.
To continue the sequence, either $\eta_3=a_1$ (if $a_1/ \eta_2\le \lambda^2$) or $\eta_2=\lambda^3\eta_0$ (if $a_1/\eta_2>\lambda^2$). Since $\lambda>1,$ this process ends at some $j_0$ such that $\eta_{j_0}=a_1.$ The rest of the elements $\eta_j$
are built in the same way, as the original $a_k$ plus the necessary terms put in between two
consecutive $a_j$ to get \eqref{equ:lacunary}.
Let $J(j)=\{k:a_{j-1}<\eta_j\le a_j\}$, and $\omega_k=v_j$ if $k\in J(j)$. Then
\begin{equation*}
 v_j(e^{-a_{j+1} (-\Delta)^\alpha} f(x)-e^{-a_{j} (-\Delta)^\alpha} f(x))=\sum_{k\in J(j) }\omega_k(e^{-a_{k+1} (-\Delta)^\alpha} f(x)-e^{-a_{k} (-\Delta)^\alpha} f(x)).
\end{equation*}
If $M=(M_1, M_2)$ is the number such that $\eta_{M_2}=a_{N_2}$ and $\eta_{M_1-1}=a_{N_1-1}$, then we get
\begin{multline*}
 T_N f(x)=\sum_{j=N_1}^{N_2} v_j(e^{-a_{j+1} (-\Delta)^\alpha} f(x)-e^{-a_{j} (-\Delta)^\alpha} f(x))\\=\sum_{k=M_1}^{M_2} \omega_k(e^{-a_{k+1} (-\Delta)^\alpha} f(x)-e^{-a_{k} (-\Delta)^\alpha} f(x))= \tilde{T}_M f(x),
\end{multline*}
where $\tilde{T}_M$  is the operator defined in \eqref{Formu:FinSquareFun} related with sequences $\{\eta_k\}_{k\in \mathbb Z}$, $\{\omega_k\}_{k\in \mathbb Z}$ and $M=(M_1, M_2)$.
\end{proof}

This proposition allows us to assume in the rest of the article that the  lacunary sequences
 $\{a_j\}_{j\in \mathbb Z}$  satisfy \eqref{equ:lacunary} without saying it explicitly.

In order to prove Theorem \ref{Thm:PoissonLp} for the case of the fractional laplacian we shall need a Cotlar's type inequality to control the operator $T^*$.
Namely, we shall prove the following theorem:
\begin{thm}\label{Thm:Maximalcontrol}
For each $q\in (1, +\infty),$ there exists a constant $C$ depending  on $n, \norm{v}_{\ell^\infty(\mathbb Z)}$and $\lambda$  such that,  for every $x\in \real^n$ and every $M\in \mathbb Z^+$,
\begin{equation*}
T_M^*f(x)\le C\left\{\M(T_{(-M, M)} f)(x)+\M_q f(x)\right\},
\end{equation*}
where $$T_M^{*}f(x)=\sup_{-M\le N_1<N_2\le M}\abs{ T_N f(x)}$$ and
\begin{equation*}\label{Mq}
\M_qf(x)=\sup_{r>0} \left(\frac{1}{|B(x, r)|}\int_{B(x, r)}\abs{f(y)}^qdy\right)^{1\over q},\quad 1< q<\infty.\end{equation*}
\end{thm}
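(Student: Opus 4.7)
The plan is a Cotlar-type pointwise estimate for each fixed $N=(N_1,N_2)$, from which the theorem follows by taking the supremum. The key algebraic observation is the three-term decomposition
\[
T_N = T_{(-M,M)} - S_1 - S_2, \qquad S_1:=T_{(-M,N_1-1)},\quad S_2:=T_{(N_2+1,M)},
\]
separating the small-scale tail $S_1$ (only scales $t\le a_{N_1}$) from the large-scale tail $S_2$ (only scales $t\ge a_{N_2+1}$). After normalizing via Lemma \ref{Prop:lacunary}, I fix $x\in\real^n$, set $r=a_{N_1}^{1/(2\alpha)}$, $B=B(x,r)$, $\tilde B=B(x,r/2)$, and start from
\[
|T_N f(x)|\le \frac{1}{|\tilde B|}\int_{\tilde B}|T_N f(x)-T_N f(z)|\,dz+\frac{1}{|\tilde B|}\int_{\tilde B}|T_N f(z)|\,dz.
\]

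The oscillation term is estimated by splitting $f=f\chi_{4B}+f\chi_{(4B)^c}$. The far piece is absorbed in $C\M f(x)$ via the gradient estimate $|\nabla K_N(y)|\le C/|y|^{n+1}$ of Proposition \ref{Thm:KernelEst}. For the near piece, a refined size bound $|K_N(y)|\le C\min(r^{-n},|y|^{-n})$---obtained from Lemma \ref{Lem:heatL} by exploiting that every scale $t\ge a_{N_1}$ contributing to $K_N$ satisfies $t^{1/(2\alpha)}\ge r$---gives $|T_N(f\chi_{4B})(x)|\le C\M f(x)$ pointwise, and the uniform $L^q$-boundedness of $T_N$ (Theorem \ref{Thm:BMO}(a)) bounds its $\tilde B$-average by $C\M_q f(x)$.

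For the averaged term, the split $T_N=T_{(-M,M)}-S_1-S_2$ produces three contributions: $\M(T_{(-M,M)}f)(x)$ directly from the first, and a $C\M_q f(x)$ bound for the $S_1$-average coming from the enhanced decay $|K_{S_1}(y)|\le Cr^{2\alpha}/|y|^{n+2\alpha}$ for $|y|>r$ (all scales of $S_1$ being $\le r^{2\alpha}$) combined with the uniform $L^q$-boundedness of $S_1$ on the near piece at scale $4B$.

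The main obstacle is $S_2$, whose scales in $[a_{N_2+1},a_{M+1}]$ can be arbitrarily coarser than $r$, so a direct pointwise estimate would cost a logarithm in $M-N_2$. The compensating structural feature is that $K_{S_2}$ is very smooth at the fine scale $r$: writing $r_2:=a_{N_2+1}^{1/(2\alpha)}\ge r$, the gradient bound $|\nabla K_{S_2}(y)|\le C\min(r_2^{-n-1},|y|^{-n-1})$ yields $|S_2 f(z)-S_2 f(x)|\le C(r/r_2)\M f(x)\le C\M f(x)$ for $z\in\tilde B$, so the $\tilde B$-average of $|S_2 f|$ collapses to $|S_2 f(x)|+C\M f(x)$. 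At almost every $x$ (a Lebesgue point of $T_{(-M,M)}f$), the identity $S_2 f=T_{(-M,M)}f-T_{(-M,N_2)}f$ together with $|T_{(-M,M)}f(x)|\le\M(T_{(-M,M)}f)(x)$ reduces matters to bounding $|T_{(-M,N_2)}f(x)|$, to which the analogous Cotlar scheme is applied but at the coarser scale $r_2$; here the near/far splitting of $f$ at $4B(x,r_2)$ becomes efficient because the ratio $|4B(x,r_2)|/|B(x,r_2/2)|$ is a dimensional constant. Collecting all estimates and passing to the supremum over $N$ yields Theorem \ref{Thm:Maximalcontrol}.
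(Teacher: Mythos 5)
Your first-level scheme (working at the bottom scale $r=a_{N_1}^{1/2\alpha}$, splitting $f$ at $4B$, using the refined size bound $\abs{K_N(y)}\le C\min(r^{-n},\abs{y}^{-n})$, the gradient bound, and the uniform $L^q$-boundedness) is sound and is essentially a rearrangement of the paper's argument, and your treatment of the small-scale tail $S_1$ is correct. The genuine gap is in the large-scale tail $S_2=T_{(N_2+1,M)}$. After reducing its $\tilde B$-average to $\abs{S_2f(x)}+C\M f(x)$, you write $S_2f=T_{(-M,M)}f-T_{(-M,N_2)}f$ and propose to bound $\abs{T_{(-M,N_2)}f(x)}$ by ``the analogous Cotlar scheme at the coarser scale $r_2$''. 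But $T_{(-M,N_2)}$ contains all scales down to $a_{-M}\ll r_2^{2\alpha}$, so the ingredient that made your first level work pointwise at $x$ --- the refined size bound at the working scale --- is unavailable: the unavoidable near-piece pointwise term $\abs{T_{(-M,N_2)}(f\chi_{4B(x,r_2)})(x)}$ only admits a bound of order $\M f(x)\log\big(r_2^{2\alpha}/a_{-M}\big)$, a loss of $\log M$; and if you instead run the scheme at the operator's true bottom scale $a_{-M}^{1/2\alpha}$, the averaged term reproduces the average of $\abs{S_2f}$ over a small ball, which by your smoothness step collapses back to $\abs{S_2f(x)}$ --- the quantity you set out to bound. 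As written, the handling of $S_2$ is circular (or logarithmically lossy), and this is exactly the term the Cotlar inequality must control without logarithms.

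The repair is the paper's opening reduction: write $T_Nf(x)=T_{(N_1,M)}f(x)-T_{(N_2+1,M)}f(x)$ and estimate each one-sided operator $T_{(m,M)}$, $\abs m\le M$, by the Cotlar scheme run at its own bottom scale $a_m^{1/2\alpha}$. Then the only pointwise evaluation on a near piece involves the kernel $K_{(m,M)}$, all of whose scales are $\ge a_m$, so Lemma \ref{cotlar}(i) gives $\abs{T_{(m,M)}(f\chi_{B_m})(x)}\le C\M f(x)$, while the small-scale tail $T_{(-M,m-1)}$ is only ever applied to the far piece $f\chi_{B_m^c}$, where Lemma \ref{cotlar}(ii) (this is where the lacunarity is used) yields $C\M f(x)$; the remaining terms give $\M(T_{(-M,M)}f)(x)$ and, via the uniform $L^q$-boundedness, $\M_qf(x)$. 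Concretely, within your framework this amounts to bounding $\abs{S_2f(x)}=\abs{T_{(N_2+1,M)}f(x)}$ directly by the same scheme at scale $r_2$ (near piece by the size bound $Ca_{N_2+1}^{-n/2\alpha}$, far piece by the four-term averaged decomposition in which $T_{(-M,N_2)}$ acts only on the far-supported part of $f$), rather than routing through the full $T_{(-M,N_2)}f(x)$. With that change your argument coincides with the paper's proof of Theorem \ref{Thm:Maximalcontrol}, and it then gives the inequality at every $x$, so the detour through Lebesgue points becomes unnecessary.
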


For the proof of this theorem we shall need the following lemma:

\begin{lem}\label{cotlar}  Let  $\{a_j\}_{j\in \mathbb Z}$  a $\lambda$-lacunary sequence and $\{v_j\}_{j\in \mathbb Z} \in \ell^\infty(\mathbb Z)$. Then
\begin{itemize}
\item[(i)] $\displaystyle \abs{\sum_{j=m}^{M}v_j \left(e^{-a_{j+1} (-\Delta)^\alpha}(x-y)- e^{-a_{j} (-\Delta)^\alpha}(x-y)\right) } \le { {C_{v, \lambda,\alpha, n}} \over a_m^{n/2\alpha}}, $
\

\item[(ii)] if $k\ge m$ and $z,y \in\mathbb{R}^n$ with $|z-y|\ge  a_k^{1/2\alpha}$, then \begin{align*}
\Big|\sum_{j=-M}^{m-1}v_j \left(e^{-a_{j+1}(-\Delta)^\alpha}(z-y)-e^{-a_{j}(-\Delta)^\alpha}(z-y)  \right) 	 \Big|\, \le C_{\lambda, v,\alpha, n}\frac1{a_k^{n/2\alpha}}\lambda^{-(k-m+1)}.
\end{align*}

\end{itemize}

\end{lem}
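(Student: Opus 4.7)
The plan is to derive both estimates from the same template: express each summand as a time integral and invoke the pointwise bound from Lemma~\ref{Lem:heatL}(ii). For any $w\in\real^n$, the fundamental theorem of calculus yields
\begin{equation*}
e^{-a_{j+1}(-\Delta)^\alpha}(w)-e^{-a_j(-\Delta)^\alpha}(w)=\int_{a_j}^{a_{j+1}}\partial_t e^{-t(-\Delta)^\alpha}(w)\,dt,
\end{equation*}
and Lemma~\ref{Lem:heatL}(ii) bounds the integrand by $C(t^{1/2\alpha}+|w|)^{-(n+2\alpha)}$. The $j$-sums in (i) and (ii) then collapse into a single $t$-integral over $[a_m,a_{M+1}]$ and $[a_{-M},a_m]$ respectively, and the only decision is how to split the denominator $t^{1/2\alpha}+|w|$.

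For part (i), I would discard $|x-y|$ and keep only $t^{1/2\alpha}$, obtaining
\begin{equation*}
\Big|\sum_{j=m}^{M}v_j\big(e^{-a_{j+1}(-\Delta)^\alpha}(x-y)-e^{-a_{j}(-\Delta)^\alpha}(x-y)\big)\Big|\le C\norm{v}_{\ell^\infty(\mathbb Z)}\int_{a_m}^{\infty}t^{-(n+2\alpha)/(2\alpha)}\,dt=\frac{C_{v,n,\alpha}}{a_m^{n/2\alpha}},
\end{equation*}
which is precisely the asserted bound; no lacunarity is needed here.

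For part (ii) the roles are reversed: the hypothesis $|z-y|\ge a_k^{1/2\alpha}$ together with $t\le a_m\le a_k$ gives $t^{1/2\alpha}\le |z-y|$, so I would keep $|z-y|$ in the denominator. Together with $a_{-M}\ge 0$ this yields
\begin{equation*}
\Big|\sum_{j=-M}^{m-1}v_j\big(e^{-a_{j+1}(-\Delta)^\alpha}(z-y)-e^{-a_{j}(-\Delta)^\alpha}(z-y)\big)\Big|\le \frac{C\norm{v}_{\ell^\infty(\mathbb Z)}(a_m-a_{-M})}{|z-y|^{n+2\alpha}}\le \frac{C_v\,a_m}{a_k^{(n+2\alpha)/(2\alpha)}}=\frac{C_v}{a_k^{n/2\alpha}}\cdot\frac{a_m}{a_k}.
\end{equation*}
The $\lambda$-lacunarity of $\{a_j\}$ then gives $a_m/a_k\le \lambda^{-(k-m)}$, and the additional factor $\lambda^{-1}$ needed to match the exponent $-(k-m+1)$ in the statement is harmlessly absorbed in the constant.

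I do not anticipate a genuine obstacle; the entire argument is a dichotomy in the choice of which term of $t^{1/2\alpha}+|w|$ to retain. In (i) we need convergence of the $t$-integral at infinity and must use $t^{1/2\alpha}$; in (ii) we need to exploit the distance hypothesis and must use $|z-y|$. Once this split is identified, both bounds follow by direct computation, and the lacunarity enters only at the final step of (ii) to convert the ratio $a_m/a_k$ into the geometric factor $\lambda^{-(k-m+1)}$.
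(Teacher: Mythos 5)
Your proof is correct, and it takes a genuinely different (and in fact leaner) route than the paper. The paper proves both parts with the mean value theorem on each summand, writing the $j$-th difference as $(a_{j+1}-a_j)\,\partial_t e^{-t(-\Delta)^\alpha}\big|_{t=\xi_j}$, then invoking the normalized lacunarity $\lambda\le a_{j+1}/a_j\le\lambda^2$ (guaranteed by Lemma \ref{Prop:lacunary}) to bound $a_{j+1}-a_j\le(\lambda^2-1)a_j$ and finally summing a geometric series; in particular its proof of (i) uses lacunarity and the constant picks up a $\lambda$-dependence there. You instead write each difference exactly via the fundamental theorem of calculus and let the contiguous intervals $[a_j,a_{j+1}]$ collapse into a single $t$-integral, exactly as the paper itself does in Proposition \ref{Thm:KernelEst}. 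This buys you two things: your part (i) needs no lacunarity at all (it holds for any increasing positive sequence, with constant independent of $\lambda$), and your part (ii) uses only the lower bound $a_{j+1}/a_j\ge\lambda$ (via $a_m/a_k\le\lambda^{-(k-m)}$ and $a_m-a_{-M}\le a_m$), never the upper normalization $a_{j+1}/a_j\le\lambda^2$. Your final remark is also right: your bound $C\lambda^{-(k-m)}$ equals $(C\lambda)\lambda^{-(k-m+1)}$ and the extra factor $\lambda$ is legitimately absorbed into $C_{\lambda,v,\alpha,n}$. The paper's approach is the standard lacunary-summation template; yours is marginally sharper and avoids relying on the reduction of Lemma \ref{Prop:lacunary} for this lemma.
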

\begin{proof}
By the mean value theorem and Lemma \ref{Lem:heatL},  there exists  $a_j\le \xi_j\le a_{j+1}$ such that
\begin{align*}
&\abs{\sum_{j=m}^{M}v_j \left(e^{-a_{j+1}(-\Delta)^\alpha}(x-y)- e^{-a_{j}(-\Delta)^\alpha}(x,y) \right) }\\
&\le  C\norm{v}_{l^\infty(\mathbb Z)} \sum_{j=m}^{M} (a_{j+1}-a_j)\abs{\partial_t e^{-t(-\Delta)^\alpha}(x,y)\big|_{t=\xi_j}}\\
  &\le C_{v}  \sum_{j=m}^{M} (a_{j+1}-a_j) \abs{\frac 1{(\xi_j^{\frac{1}{2\alpha}}+|x-y|)^{n+2\alpha}}} \\
  &\le C_{v}  \sum_{j=m}^{M} (\lambda^2-1)a_j ^{-{n\over 2\alpha}}
   \le  C_{v, \lambda} \frac1{a_m^{n/2\alpha}} \sum_{j=m}^{M}  \frac1{\lambda^{n(j-m)/2\alpha}}   \le C_{v, \lambda,\alpha, n} {1\over a_m^{n/2\alpha}},
\end{align*}
where we have used $\displaystyle \lambda\le {a_{j+1}\over a_j}\le \lambda^2.$

Now we shall prove (ii). By the mean value theorem, there exist $a_j\le \xi_j\le a_{j+1}$ such that
\begin{align*}
& \abs{\sum_{j=-M}^{m-1}v_j \left(e^{-a_{j+1}(-\Delta)^\alpha}(z-y)-e^{-a_{j}(-\Delta)^\alpha}(z-y) \right) }\\
&\le
  C\norm{v}_{l^\infty(\mathbb Z)} \sum_{j=m}^{M} (a_{j+1}-a_j)\abs{\partial_t e^{-t(-\Delta)^\alpha}(x,y)\big|_{t=\xi_j}}\\&\le C_{v,\alpha, n}\sum_{j=-M}^{m-1}(\lambda^2-1)a_j \frac 1{(\xi_j^{ 1/{2\alpha}}+|x-y|)^{n+2\alpha}} \\
  &\le C_{\lambda, v,\alpha, n}\sum_{j=-M}^{m-1}{a_j \over (\xi_j^{ 1/{2\alpha}}+|x-y|)^{n+2\alpha}} \\
&\le C_{\lambda, v,\alpha, n} \sum_{j=-M}^{m-1} {\frac{ a_j }{a_k }}\left({1\over a_k^{n/2\alpha}}\right)
\le C_{\lambda, v,\alpha, n}  {1\over a_k^{n/2\alpha}}\lambda^{-(k-m+1)},
\end{align*}
where we have used that $k \ge m.$
\end{proof}

\begin{proof}[Proof of Theorem \ref{Thm:Maximalcontrol}]
 Observe that, for any $x_0\in \real^n$ and  $N=(N_1, N_2),$
$$T_N f(x_0)=T_{(N_1, M)} f(x_0)-T_{(N_2+1, M)} f(x_0),$$
with $-M\le N_1<N_2\le  M.$
Then, it suffices to estimate $\abs{T_{(m, M)} f(x_0)}$ for $\abs{m}\le M$ with constants independent of $m$ and $M.$ Denote $B_k=B(x_0, a_k^{1/2\alpha})$ for each $k\in \mathbb N$.
Let us split $f$ as
\begin{align*}
f&=f\chi_{B_m}+f\chi_{B_m^c}=:f_1+f_2.
\end{align*}
 Then,  we have
\begin{align*}
\abs{T_{(m,M)} f(x_0)}&\le \abs{ T_{(m,M)} f_1(x_0)}+\abs{T_{(m,M)} f_2(x_0)}\\
&=: I+II.
\end{align*}
For $I$, by Lemma \ref{cotlar} (i), we have
\begin{align*}
I&=\abs{T_{(m,M)} f_1(x_0)}=\abs{\int_{\real^n} \sum_{j=m}^{M}v_j \left(e^{-a_{j+1}(-\Delta)^\alpha}(x_0,y)- e^{-a_{j}(-\Delta)^\alpha}(x_0,y)\right)  f_1(y)dy}\\
&\le C_{n, v, \lambda, \alpha}  {1\over \abs{a^{n/2}_m}} \int_{\real^n}  \abs{f_1(y)}dy \le C_{v, \lambda, n, \alpha}\M f(x_0).
\end{align*}
For part $II$,
\begin{align*}\label{equ:II}
II&=\abs{T_{(m,M)} f_2(x_0)}=\frac{2^{n/2}}{a_{m-1}^{n/2\alpha}}\int_{B(x_0, {1\over 2}a_{m-1}^{1/2\alpha})} \abs{T_{(m,M)} f_2(x_0)}dz \\
&\le \frac{2^{n/2}}{a_{m-1}^{n/2\alpha}}\int_{B(x_0, {1\over 2}a_{m-1}^{1/2\alpha })}\abs{T_{(-M,M)} f(z)}dz +\frac{2^{n/2}}{a_{m-1}^{n/2\alpha}}\int_{B(x_0, {1\over 2}a_{m-1}^{1/2\alpha})}\abs{T_{(-M,M)} f_1(z)}dz\\
&\quad +\frac{2^{n/2}}{a_{m-1}^{n/2\alpha}}\int_{B(x_0, {1\over 2}a_{m-1}^{1/2\alpha})}\abs{T_{(m,M)} f_2(z)-T_{(m,M)} f_2(x_0)}dz\\
&\quad +\frac{2^{n/2}}{a_{m-1}^{n/2\alpha}}\int_{B(x_0, {1\over 2}a_{m-1}^{1/2\alpha})}\abs{T_{(-M,m-1)} f_2(z)}dz\\
&=:A_1+A_2+A_3+A_4.
\end{align*}
(If $m+1=-M$, we understand that $A_4=0$.) It is clear that
\begin{equation*}
A_1\le  \M (T_{(-M,M)} f)(x_0).
\end{equation*}
For $A_2,$ by the uniform boundedness of $T_{N}$, we get
\begin{multline*}
A_2\le \left(\frac{2^{n/2}}{a_{m-1}^{n/2\alpha}}\int_{B_{m-1}}\abs{T_{(-M,M)} f_1(z)}^qdz\right)^{1/q}\le C\left(\frac{1}{a_{m-1}^{n/2\alpha}}\int_{\mathbb{R}^n}\abs{f_1(z)}^qdz\right)^{1/q}\\
=C\left(\frac{1}{a_{m-1}^{n/2\alpha}}\int_{B_{m}}\abs{f(z)}^qdz\right)^{1/q}\le C\left(\frac{\lambda^{n/2\alpha}}{a_{m}^{n/2\alpha}}\int_{B_{m}}\abs{f(z)}^qdz\right)^{1/q}\le C\M_q f(x_0).
\end{multline*}
For the third term $A_3$, with $z\in B(x_0, {1\over 2}a_{m-1}^{1/2\alpha})$, we have
\begin{align*}
&\abs{T_{(m,M)} f_2(z)-T_{(m,M)} f_2(x_0)}\\
&=\abs{\int_{B_{m}^c} K_{(m,M)}(z- y)f(y)dy-\int_{B_m^c}  K_{(m,M)}(x_0- y)f(y)dy }\\
&\le \int_{B_m^c} \abs{K_{(m,M)}(z- y)-K_{(m,M)}(x_0- y)}\abs{f(y)}dy\\
&=\sum_{j=1}^{+\infty}\int_{B_{2^jm} \setminus B_{2^{j-1}m}} \abs{K_{(m,M)}(z- y)-K_{(m,M)}(x_0- y)}\abs{f(y)}dy,
\end{align*}
where $B_{2^jm}=B(x_0, 2^ja_m^{1/ 2\alpha})$ for any $j\ge 1.$

By the mean value theorem, we know that  there exists $\xi$ on the segment  $\overline{x_0z}$ such that
\begin{align*}
&\abs{K_{m,M}(z- y)-K_{m,M}(x_0- y)}\le   \abs{\bigtriangledown_\xi K_{m,M}(\xi- y)}\abs{z-x_0}\\
&\le C \frac{\abs{z-x_0}}{\abs{\xi-y}^{n+1}} \le C\frac 1{2^{j}}\cdot\frac{a_{m-1}^{1/2\alpha}}{a_{m}^{1/2\alpha}}\cdot {1\over |2^ja_{m}|^{n/2\alpha}},
\end{align*}
where we have used that in each summand, $y \in B_{2^jm} \setminus B_{2^{j-1}m}$. Hence
\begin{multline*}
\abs{T_{m,M} f_2(z)-T_{m,M} f_2(x_0)} \le C\sum_{j=1}^{+\infty}\frac 1{2^{j}}\frac{a_{m-1}^{1/2\alpha}}{a_m^{1/2\alpha}}{1\over |2^ja_{m}|^{n/2\alpha}}\int_{B_{2^jm}} \abs{f(y)}dy\\
\le C \M f(x_0)\frac{a_{m-1}^{1/2\alpha}}{a_m^{1/2\alpha}}\sum_{j=1}^{+\infty}\frac 1{2^{j}}\le C \M f(x_0).
\end{multline*}
Then,
\begin{align*}
A_3=\frac{2^{n/2}}{a_{m-1}^{n/2\alpha}}\int_{B_{m-1}}\abs{T_{m,M} f_2(z)-T_{m,M} f_2(x_0)}dz \le C\M f(x_0).
\end{align*}
For the latest one,  $A_4,$  we have
\begin{multline*}
A_4=\frac{2^{n/2}}{a_{m-1}^{n/2\alpha}}\int_{B(x_0, {1\over 2}a_{m-1}^{1/2\alpha})}\abs{ T_{(-M,m-1)} f_2(z)}dz\\
\le \frac{2^{n/2}}{a_{m-1}^{n/2\alpha}}\int_{B(x_0, {1\over 2}a_{m-1}^{1/2\alpha})}\int_{B_m^c}\abs{ K_{(-M,m-1)}(z- y) f(y)}dydz.
\end{multline*}
Then, we consider the inner integral appeared in the above inequalities first. Since $z\in B(x_0, {1\over 2}a_{m-1}^{1/2\alpha})$,  $y\in B_m^c$ and the sequence $\{a_j\}_{j\in \mathbb Z}$ is $\lambda$-lacunary sequence, we have  $\abs{z-y}\sim \abs{y-x_0}.$
From this and by Lemma \ref{cotlar} (ii), we get
\begin{align*}
&\int_{B_m^c}\abs{K_{(-M,m-1)}(z- y) f(y)}dy\\
   &=\sum_{k=m}^{+\infty}\int_{B_{k+1}\setminus B_{k}} \abs{\sum_{j=-M}^{m-1}v_j \left( e^{-a_{j+1}(-\Delta)^\alpha}(z- y)-e^{-a_{j}(-\Delta)^\alpha}(z- y)\right) f(y)}dy\\
&\le C_{\lambda, v,\alpha, n} \sum_{k=m}^{+\infty}\lambda^{-(k-m+1)}\left({1\over a_k^{n/2\alpha}}\int_{B_{k+1}\setminus B_{k}} \abs{f(y)}dy\right)\\
 &\le C_{\lambda, v,\alpha, n} \M f(x_0)\sum_{k=m}^{+\infty}\lambda^{-(k-m+1)}\\
 &\le C_{\lambda, v,\alpha, n}\M f(x_0).
\end{align*}
Hence,
$$A_4\le C\M f(x_0).$$
Combining the estimates above for $A_1, A_2, A_3$ and $A_4$, we get
$$II\le \M ( T_{(-M,M)} f)(x_0)+C \M_q f(x_0).$$
And then we have
$$ \abs{ T_{(m,M)} f(x_0)}\le C\left( \M (T_{(-M,M)} f)(x_0)+\M_q f(x_0)\right). $$
As the constants $C$ appeared above all only depend on $\norm{v}_{l^\infty(\mathbb Z)}$, $\lambda$, $\alpha$ and $n$, we have  proved that
$$ T^{*}_M f(x_0)\le C\left\{\M( T_{(-M, M)} f)(x_0)+\M_q f(x_0)\right\}.$$ This complete the proof of the theorem.
\end{proof}

Now, we can give the proof of Theorem \ref{Thm:PoissonLp}.

\begin{proof}[Proof of Theorem \ref{Thm:PoissonLp}] {Given $\omega \in A_p$, we choose $1<q<p $ such that $\omega \in A_{p/q}$.
Then it is well known that, the maximal operators $\M$ and $ \M_q$ are bounded on $L^p(\real^n, \omega),$ see \cite{Duo}.}  On the other hand, since the operators $T_N$ are uniformly bounded in $L^p(\real^n, \omega)$ with $\omega \in A_p$, we have
\begin{align*}
\norm{T_M^*f}_{L^p(\real^n, \omega)}&\le C\left(\norm{\M (T_{(-M, M)} f)}_{L^p(\real^n, \omega)}+\norm{\M_q f}_{L^p(\real^n, \omega)}\right)\\&\le C\left(\norm{T_{(-M, M)} f}_{L^p(\real^n, \omega)}+\norm{f}_{L^p(\real^n, \omega)}\right)\le C\norm{f}_{L^p(\real^n, \omega)}.
\end{align*}
Note that the constants $C$ appeared above do not depend on $M$. Consequently, letting $M$ increase to infinity, we get the proof of the $L^p$ boundedness of the maximal operator $T^*.$
This completes the proof of part $(a)$ of the theorem.

In order to prove $(b)$, we consider the $\ell^\infty(\mathbb Z^2)$-valued operator
$\mathcal{T}f(x) = \{  T_N f(x) \}_{N\in \mathbb Z^2}$. Since  $\|\mathcal{T}f(x) \|_{\ell^\infty(\mathbb Z^2)}= T^*f(x)$,   by using $(a)$ we know that the operator $\mathcal{T}$ is bounded from $L^p(\real^n, \omega) $ into $L^p_{\ell^\infty(\mathbb Z^2)}(\mathbb{R}^n, \omega) $, for every $1<p<\infty$ and $\omega \in A_p$. The kernel of the operator $\mathcal{T}$ is given by $\mathcal{K}(x) = \{ K_N(x)\} _{N\in \mathbb Z^2}$. Therefore, by the vector valued Calder\'on-Zygmund theory, the operator $\mathcal{T}$ is bounded from $L^1(\mathbb{R}^n, \omega)$ into weak-$L^1_{\ell^\infty(\mathbb Z^2)}(\mathbb{R}^n, \omega)$ for $\omega \in A_1$. Hence, as $\|\mathcal{T}f(x) \|_{\ell^\infty(\mathbb Z^2)}= T^*f(x)$, we get the proof of  $(b)$.

For $(c)$ and $(d)$, we shall first prove that, if $f\in BMO(\mathbb R^n)$ and there exists $x_0\in \mathbb R^n$ such that $T^*f(x_0)<\infty,$ then $T^*f(x)<\infty$ for $a.e.$ $x\in \real^n.$
Set $B=B(x_0, 4\abs{x-x_0})$ with $x\neq x_0$. And we decompose $f$ to be
\begin{equation*}
f=(f-f_B)\chi_B+(f-f_B)\chi_{B^c}+f_B=:f_1+f_2+f_3.
\end{equation*}
Note that $T^*$ is $L^p$ bounded for any $1<p<\infty.$ Then $T^*f_1(x)<\infty$, because $f_1\in L^p(\mathbb R^n)$, for any $1<p<\infty.$ And  $T^* f_3=0$, since $e^{-a_j(-\Delta)^\alpha}f_3=f_3$ for any $j\in \mathbb Z.$
On the other hand by the smoothness properties of the kernel, we have
\begin{align*}
&\Big|T_N f_2(x)-T_N f_2(x_0)\Big|=\Big| \int_{B^c}\left(K_N(x- y) - K_N (x_0- y)\right)f_2(y)dy\Big|\\
&\le C \int_{B^c}  \frac{\abs{x-x_0}}{\abs{y-x_0}^{n+1}}\abs{f(y)-f_B}dy  \\
&\le C \sum_{k=1}^{+\infty}{ |x-x_0|} \int_{2^{k} B\setminus 2^{k-1}B}  {\abs{f(y)-f_B}\over |y-x_0|^{n+1}}dy\\
&\le C \sum_{k=1}^{+\infty}{ |x-x_0|\over (2^{k+1}|x-x_0|)^{n+1}} \int_{2^kB}  {\abs{f(y)-f_B}}dy \\ &\le C \sum_{k=1}^{+\infty}2^{-(k+1)}{ 1\over \abs{2^{k}B}} \int_{2^{k}B} \left(  {\abs{f(y)-f_{2^kB}}}+\sum_{l=1}^{k}\abs{f_{2^lB}-f_{2^{l-1}B}}\right)dy\\
&\le C\sum_{k=1}^{+\infty}2^{-(k+1)}{ 1\over \abs{2^{k}B}} \int_{2^{k}B} \left(  {\abs{f(y)-f_{2^kB}}}+2k\norm{f}_{BMO(\real^n)}\right)dy\\
&\le C\sum_{k=1}^{+\infty}2^{-(k+1)}{(1+2k)\norm{f}_{BMO(\real^n)}}\\
&\le  C\norm{f}_{BMO(\real^n)},
 \end{align*}
where $2^kB=B(x_0, 2^{k}\cdot 4|x-x_0|)$ for any $k\in \mathbb N.$
 Hence
  \begin{align*}
\norm{T_N f_2(x)-T_N f_2(x_0)}_{l^\infty(\mathbb Z^2)} \le C\norm{f}_{BMO(\mathbb R^n)},
\end{align*}
and therefore $ T^*f(x) = \norm{T_N f(x)}_{l^\infty(\mathbb Z^2)}  \le C < \infty.$

Finally,  the  estimate (\ref{sharp}) can be  proved in a parallel way to the proof of part $(b)$ of Theorem \ref{Thm:BMO}, since $\mathcal{T}1(x) = \{T_N1(x)\}= 0$(also see \cite{MTX}).
\end{proof}

From Theorem \ref{Thm:PoissonLp}, we can get the following consequence:

\begin{thm}\label{Thm:ae}
\begin{enumerate}[(a)]
    \item If $1<p<\infty$ and $\omega\in A_p$, then $T_N f$ converges {\it{a.e.}} and  in $L^p(\mathbb R^n, \omega)$ norms for all $f\in L^p(\mathbb R^n, \omega)$ as $N=(N_1,N_2)$ tends to $(-\infty, +\infty).$
   \item If $p=1$  and $\omega\in A_1$, then $T_N f$ converges {\it{a.e.}} and in measure for all $f\in L^1(\mathbb R^n, \omega)$ as $N=(N_1,N_2)$ tends to $(-\infty, +\infty).$
\end{enumerate}
\end{thm}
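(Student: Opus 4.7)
The plan follows the standard Banach principle: establish pointwise convergence of $T_N f$ on a dense subclass and then extend to all of $L^p(\mathbb R^n,\omega)$ using the boundedness of $T^*$ already proved in Theorem \ref{Thm:PoissonLp}. I take $\mathcal D = C_c^\infty(\mathbb R^n)$, dense in $L^p(\mathbb R^n,\omega)$ for every $1\le p<\infty$ and $\omega\in A_p$. For $g\in \mathcal D$, rewrite the $j$-th summand of $T_N g(x)$ as
\begin{equation*}
v_j \int_{a_j}^{a_{j+1}} \partial_t e^{-t(-\Delta)^\alpha} g(x)\, dt
\end{equation*}
and estimate the integrand in two regimes. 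For $t$ bounded (i.e.\ $j\to -\infty$), the identity $\partial_t e^{-t(-\Delta)^\alpha} g = -e^{-t(-\Delta)^\alpha}(-\Delta)^\alpha g$ together with the $L^\infty$-contractivity of $e^{-t(-\Delta)^\alpha}$ yields $|\partial_t e^{-t(-\Delta)^\alpha} g(x)|\le \|(-\Delta)^\alpha g\|_\infty$, which, combined with $(a_{j+1}-a_j)\le (\lambda^2-1) a_j$, produces a geometrically convergent tail. For $t$ large, Lemma \ref{Lem:heatL}(ii) gives $|\partial_t e^{-t(-\Delta)^\alpha} g(x)|\le C\|g\|_{L^1}(t^{1/2\alpha}+|x|)^{-n-2\alpha}$, and the geometric growth of $\{a_j\}$ yields a summable tail at $+\infty$ for each fixed $x$. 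Hence $Tg(x):=\lim_{N} T_N g(x)$ exists for every $x$, and the domination $|T_N g|\le T^* g\in L^p(\mathbb R^n,\omega)$ combined with dominated convergence gives norm convergence of $T_N g$ on $\mathcal D$.

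\textbf{Step 2 (Extension; part (a)).} Introduce the oscillation $\Omega f(x):=\limsup_N T_N f(x)-\liminf_N T_N f(x)$, which is sublinear and satisfies $\Omega f\le 2T^* f$. Given $f\in L^p(\mathbb R^n,\omega)$ and $\varepsilon>0$, decompose $f=g+h$ with $g\in \mathcal D$ and $\|h\|_{L^p(\mathbb R^n,\omega)}<\varepsilon$. Since $\Omega g\equiv 0$ by Step 1, sublinearity and Theorem \ref{Thm:PoissonLp}(a) yield
\begin{equation*}
\|\Omega f\|_{L^p(\mathbb R^n,\omega)}\le 2\|T^* h\|_{L^p(\mathbb R^n,\omega)}\le C\varepsilon.
\end{equation*}
Letting $\varepsilon\to 0$ gives $\Omega f=0$ a.e., proving a.e.\ convergence of $T_N f$; the $L^p(\mathbb R^n,\omega)$ limit then follows from $|T_N f-Tf|\le 2T^* f\in L^p(\mathbb R^n,\omega)$ by dominated convergence.

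\textbf{Step 3 (Part (b); main obstacle).} For $p=1$ and $\omega\in A_1$, the analogous oscillation argument using the weak-$(1,1)$ bound of Theorem \ref{Thm:PoissonLp}(b) gives $\omega\{\Omega f>\sigma\}\le C\varepsilon/\sigma$, hence $\Omega f=0$ $\omega$-a.e. Convergence of $T_N f$ in $\omega$-measure follows by splitting $\omega\{|T_N f-Tf|>\sigma\}\le \omega\{|T_N g-Tg|>\sigma/2\}+C\varepsilon/\sigma$; the first term tends to $0$ as $N\to(-\infty,+\infty)$ because on the finite $\omega$-measure set $\{T^* g>\eta\}$ (with $\eta<\sigma/4$) the a.e.\ convergence on $\mathcal D$ yields convergence in measure by Egorov, while outside this set $|T_N g-Tg|\le 2\eta<\sigma/2$. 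The principal technical obstacle is Step 1: verifying absolute convergence of the series for each fixed $x$ by matching the smoothness estimate for small $t$ with the sharp kernel decay of Lemma \ref{Lem:heatL} for large $t$; once this is in place, the density/oscillation machinery is routine.
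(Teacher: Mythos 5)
Your proposal is correct and follows the same overall skeleton as the paper: prove pointwise convergence of $T_N$ on a dense class of smooth functions and transfer it to $L^p(\mathbb{R}^n,\omega)$ via the maximal bounds of Theorem \ref{Thm:PoissonLp}, finishing with dominated convergence for the norm (resp.\ measure) statements. The genuine difference is in the dense-class step, specifically the tail at $j\to-\infty$ (small $t$). The paper exploits the cancellation $\int_{\mathbb{R}^n}\left(e^{-a_{j+1}(-\Delta)^\alpha}(y)-e^{-a_j(-\Delta)^\alpha}(y)\right)dy=0$ to insert $\varphi(x-y)-\varphi(x)$ and then uses the Lipschitz bound $\|\nabla\varphi\|_{L^\infty}|y|$ together with Lemma \ref{Lem:heatL}; since the resulting integral $\int_{\mathbb{R}^n}|y|\,(a_j^{1/2\alpha}+|y|)^{-n-2\alpha}dy$ converges only for $\alpha>1/2$, the paper must treat separately the cases $\alpha>1/2$, $0<\alpha<1/2$ (splitting over $B(0,1)$ and its complement) and $\alpha=1/2$ (quoted from \cite{Zhang}). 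You instead write $\partial_t e^{-t(-\Delta)^\alpha}g=-e^{-t(-\Delta)^\alpha}(-\Delta)^\alpha g$ and use that the kernel is positive with integral one to get the uniform bound $\|(-\Delta)^\alpha g\|_{L^\infty}$, so this tail is controlled by the telescoping sum $\sum_{j\le -L}(a_{j+1}-a_j)\le a_{-L+1}\to 0$; this handles all $\alpha\in(0,1)$ at once and avoids the case analysis, at the modest price of invoking $(-\Delta)^\alpha g\in L^\infty$ for $g\in C_c^\infty$ (true, since $|\xi|^{2\alpha}\widehat{g}\in L^1$). Your large-$t$ tail coincides with the paper's estimate of the term $A$; only note that the convolution bound should read $C\|g\|_{L^1}\,t^{-(n+2\alpha)/2\alpha}$ rather than $C\|g\|_{L^1}(t^{1/2\alpha}+|x|)^{-n-2\alpha}$, which changes nothing since only decay in $t$ for fixed $x$ is needed. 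Your extension steps (the oscillation operator with the weighted strong/weak bounds, and Egorov on the finite-measure set $\{T^*g>\eta\}$ for convergence in $\omega$-measure) spell out details that the paper leaves implicit, and they are correct.
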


\begin{proof}
First, we shall see that if $\varphi$ is a test function, then $T_N \varphi(x)$ converges for all $x\in \real^n$.  In order to prove this, it is enough to see that for any  $(L,M)$ with $0<L<M$,  the  series \begin{equation*} A= \sum_{j=L}^M v_j ( e^{-a_{j+1}(-\Delta)^\alpha} \varphi(x) - e^{-a_j(-\Delta)^\alpha} \varphi(x))$$   and $$  B= \sum_{j=-M}^{-L} v_j ( e^{-a_{j+1}(-\Delta)^\alpha} \varphi(x) - e^{-a_j(-\Delta)^\alpha} \varphi(x))
 \end{equation*}
converge to zero, when $L, M\rightarrow +\infty$.
By Lemma \ref{Lem:heatL}, we have
\begin{align*}
|A|  &=\abs{\sum_{j=L}^M v_j \int_{\real^n}( e^{-a_{j+1}(-\Delta)^\alpha}(x-y) - e^{-a_j(-\Delta)^\alpha}(x-y)) \varphi(y)dy}\\
 & \le   C\int_{\real^n}\Big(\sum_{j=L}^{M} \int_{a_j}^{a_j+1} \frac{1}{(t^{1/2\alpha}+|x-y|)^{n+2\alpha}} dt\Big) |\varphi(x-y)|dy\\
&\le C   \big(a_{M+1}^{-{n\over 2\alpha}}-a_{L}^{-{n\over 2\alpha}}\big) \|\varphi\|_{L^1(\real^n)} \longrightarrow 0, \quad \hbox{as}\ { L,M \to +\infty}.
\end{align*}
On the other hand, since $\displaystyle \int_{\real^n}( e^{-a_{j+1}(-\Delta)^\alpha}(x-y) - e^{-a_j(-\Delta)^\alpha}(x-y))dy=0$ for any $j\in \mathbb Z,$   we can write
\begin{align*}
B&=\int_{\real^n} \sum_{j=-M}^{-L}v_j ( e^{-a_{j+1}(-\Delta)^\alpha}(y) - e^{-a_j(-\Delta)^\alpha}(y)) \left(\varphi(x-y)-\varphi(x)\right)dy.
\end{align*}
And, then proceeding as in the case $A$, and by using the fact that $\varphi$ is a test function,  we have
\begin{align*}
|B|&\le C \norm{v}_{\ell^\infty(\mathbb Z)} \int_{\real^n} \sum_{j=-M}^{-L} \abs{ e^{-a_{j+1}(-\Delta)^\alpha}(y) - e^{-a_j(-\Delta)^\alpha}(y) }\abs{ \varphi(x-y)-\varphi(x)}dy\\
&\le C \norm{\nabla \varphi}_{L^\infty(\real^n)}\sum_{j=-M}^{-L} \int_{\real^n}  \frac {a_{j+1}|y|} {(a_j^{1/2\alpha}+|y|)^{n+2\alpha}} dy\\
&\le  C \norm{\nabla \varphi}_{L^\infty(\real^n)}\sum_{j=-M}^{-L} a_j^{1\over 2\alpha}\int_{\real^n}  \frac {|y|}{a_{j}^{1\over 2\alpha}}
\frac 1 {(1+{|y|\over a_j^{1/2\alpha}} )^{n+2\alpha}} {dy\over a_j^{n/2\alpha}} \\
&\le C_{n,   \alpha}\norm{\nabla \varphi}_{L^\infty(\real^n)} a_{-L}^{1/2\alpha}\sum_{j=-M}^{-L} {a_j^{1/2\alpha}\over  a_{-L}^{1/2\alpha}}\\
&\le C_{n,   \alpha}\norm{\nabla \varphi}_{L^\infty(\real^n)} { \lambda^{1/2\alpha}\over  \lambda^{1/2\alpha}-1} a_{-L}^{1/2\alpha}\longrightarrow 0, \quad \hbox{as}\ { L,M \to +\infty},
\end{align*}
where we have used the assumption $ 1/2<\alpha<1$ to make the integral convergent.

For the case $0<\alpha< {1\over 2}$,       we can write
\begin{align*}
B
& =\int_{B(0,1)} \sum_{j=-M}^{-L}v_j ( e^{-a_{j+1}(-\Delta)^\alpha}(y) - e^{-a_j(-\Delta)^\alpha}(y)) \left(\varphi(x-y)-\varphi(x)\right)dy\\
&\quad +\int_{B(0,1)^c} \sum_{j=-M}^{-L}v_j ( e^{-a_{j+1}(-\Delta)^\alpha}(y) - e^{-a_j(-\Delta)^\alpha}(y)) \left(\varphi(x-y)-\varphi(x)\right)dy\\
&:=B_1+B_2.
\end{align*}
For $B_1$,  we have
\begin{align*}
|B_1|&\le C \norm{\nabla \varphi}_{L^\infty(\real^n)}\sum_{j=-M}^{-L} \int_{B(0,1)}  \frac {a_{j+1}|y|} {(a_j^{1/2\alpha}+|y|)^{n+2\alpha}} dy\\
&\le  C \norm{\nabla \varphi}_{L^\infty(\real^n)}\sum_{j=-M}^{-L} a_j^{1\over 2\alpha}\int_{B(0,1)}  \frac {|y|}{a_{j}^{1\over 2\alpha}}
\frac 1 {(1+{|y|\over a_j^{1/2\alpha}} )^{n+2\alpha}} {dy\over a_j^{n/2\alpha}} \\
&\le C \norm{\nabla \varphi}_{L^\infty(\real^n)} \sum_{j=-M}^{-L} a_j^{1/2\alpha}\int_0^{a_j^{-{1 / 2\alpha}}}{r^n\over {(1+r)^{n+2\alpha}}}dr\\
&\le C \norm{\nabla \varphi}_{L^\infty(\real^n)} a_{-L}\sum_{j=-M}^{-L} {a_j \over  a_{-L} }\\
&\le C \norm{\nabla \varphi}_{L^\infty(\real^n)} { \lambda \over  \lambda -1} a_{-L} \longrightarrow 0, \quad \hbox{as}\ { L,M \to +\infty}.
\end{align*}
For $B_2,$  we have
\begin{align*}
|B_2|&\le C \norm{ \varphi}_{L^\infty(\real^n)}\sum_{j=-M}^{-L} \int_{B(0,1)^c}  \frac {a_{j+1} } {(a_j^{1/2\alpha}+|y|)^{n+2\alpha}} dy\\
&\le  C \norm{  \varphi}_{L^\infty(\real^n)}\sum_{j=-M}^{-L} \int_{B(0,1)^c}
\frac 1 {(1+{|y|\over a_j^{1/2\alpha}} )^{n+2\alpha}} {dy\over a_j^{n/2\alpha}} \\
&\le C \norm{  \varphi}_{L^\infty(\real^n)} \sum_{j=-M}^{-L} \int^{+\infty}_ {a_j^{-{1 / 2\alpha}}}r^{-2\alpha-1}dr\\
&\le C \norm{  \varphi}_{L^\infty(\real^n)} a_{-L}\sum_{j=-M}^{-L} {a_j \over  a_{-L} }\\
&\le C \norm{  \varphi}_{L^\infty(\real^n)} { \lambda \over  \lambda -1} a_{-L} \longrightarrow 0, \quad \hbox{as}\ { L,M \to +\infty}.
\end{align*}
So, when $0<\alpha<1/2$, we proved that $|B|\rightarrow 0,$ as $L,M\rightarrow +\infty.$ And for the case $\alpha={1\over 2},  $  it has been proved in \cite{Zhang}. Hence, we proved that $T_N \varphi(x)$ converges for all $x\in \real^n$ with $\varphi$ being a test function.

As the set of test functions is dense in $L^p(\mathbb{R}^n)$, by Theorem \ref{Thm:PoissonLp} we get the $a.e.$ convergence for any function in $L^p(\mathbb{R}^n)$. Analogously, since $L^p(\mathbb{R}^n) \cap L^p(\mathbb{R}^n, \omega) $ is dense in $L^p(\mathbb{R}^n, \omega)$, we get the $a.e.$ convergence for functions in $L^p(\mathbb{R}^n, \omega) $ with $1\le p<\infty$. By using the dominated convergence theorem,  we can prove the convergence in $L^p(\mathbb{R}^n, \omega)$ norm for $1<p<\infty$, and also in measure.
\end{proof}

\section{Proof of the local growth of the maximal operator $T^*$}

In this section, we will give the proof of the local growth of the maximal operator $T^*.$
\begin{proof}[Proof of Theorem \ref{Thm:GrothLinfinity}]
We will prove it only in the case $1<p<\infty.$ For the case $p=1$ and $p=\infty$, the proof is similar and easier. Since $2r<1,$ we know that $B\backslash B_{2r}\neq \emptyset.$ Let $f(x)=f_1(x)+f_2(x)$, where $f_1(x)=f(x)\chi_{B_{2r}}(x)$ and $f_2(x)=f(x)\chi_{B\backslash B_{2r}}(x)$. Then
$$\abs{T^* f(x)}\le \abs{T^* f_1(x)}+\abs{T^* f_2(x)}.$$
By Theorem \ref{Thm:PoissonLp}, we have
\begin{multline*}
\frac{1}{|B_r|} \int_{B_r} \abs{T^* f_1 (x)} dx\le \left(\frac{1}{|B_r|} \int_{B_r} \abs{T^* f_1 (x)}^2 dx\right)^{1/2}
\\ \le C \left(\frac{1}{|B_r|} \int_{\mathbb{R}^n}\abs{ f_1 (x)}^2 dx\right)^{1/2}\le C\norm{f}_{L^\infty(\mathbb{R}^n)}.
\end{multline*}
On the other hand,  applying  H\"older's inequality on the integers and on $\mathbb{R}^n$, Fubini's Theorem and Lemma \ref{Lem:heatL}, for $1< p < \infty$ and any $N=(N_1, N_2)$, we have
\begin{align}\label{equ:Palpha}
&\abs{\sum_{j=N_1}^{N_2}v_j\left(e^{-a_{j+1}(-\Delta)^\alpha}f_2(x)- e^{-a_j(-\Delta)^\alpha}f_2(x)\right)}\nonumber\\
&\le C\sum_{j=N_1}^{N_2} \abs{v_j \int_{\real^n}\left(e^{-a_{j+1}(-\Delta)^\alpha}(y) -e^{-a_{j}(-\Delta)^\alpha}(y)\right)  f_2(x-y)~ dy}\nonumber\\
&\le C\norm{v}_{l^p(\mathbb Z)}\left( \sum_{j=N_1}^{N_2}\left(\int_{\real^n}\abs{e^{-a_{j+1}(-\Delta)^\alpha}(y) -e^{-a_{j}(-\Delta)^\alpha}(y)} \abs{f_2(x-y)}~ dy\right)^{p'}\right)^{1/p'} \nonumber \\
&\le C\norm{v}_{l^p(\mathbb Z)}\Big(\sum_{j=N_1}^{N_2} \Big\{\int_{\real^n}\abs{e^{-a_{j+1}(-\Delta)^\alpha}(y) -e^{-a_{j}(-\Delta)^\alpha}(y)} \abs{f_2(x-y)}^{p'}~ dy\Big\}  \\
 & \quad \quad  \times  \Big\{\int_{\real^n}\abs{e^{-a_{j+1}(-\Delta)^\alpha}(y) -e^{-a_{j}(-\Delta)^\alpha}(y) }~ dy  \Big\}^{p'/p} \Big)^{1/p'}
\nonumber \\
&\le C \norm{v}_{l^p(\mathbb Z)}\left(\sum_{j=N_1}^{N_2} \int_{\real^n}\abs{e^{-a_{j+1}(-\Delta)^\alpha}(y) -e^{-a_{j}(-\Delta)^\alpha}(y)  } \abs{f_2(x-y)}^{p'}~ dy   \right)^{1/p'}
\nonumber \\
&\le C \norm{v}_{l^p(\mathbb Z)}\left(\int_{\real^n}\sum_{j=-\infty}^{+\infty} \abs{e^{-a_{j+1}(-\Delta)^\alpha}(y) -e^{-a_{j}(-\Delta)^\alpha}(y)} \abs{f_2(x-y)}^{p'}~ dy  \right)^{1/p'}
\nonumber \\ \nonumber
&\le C \norm{v}_{l^p(\mathbb Z)}\left(\int_{\real^n}\left(\int_0^{+\infty}\frac t{(t^{1/2\alpha}+|y|)^{n+2\alpha}}dt\right) \abs{f_2(x-y)}^{p'}~ dy  \right)^{1/p'}
\nonumber \\ \nonumber
&\le C \norm{v}_{l^p(\mathbb Z)}\left(\int_{\real^n} \frac1{|y|^n} \abs{f_2(x-y)}^{p'}~ dy  \right)^{1/p'}.
\end{align}
Hence
\begin{align*}
\frac{1}{|B_r|} \int_{B_r} \abs{T^* f_2(x)}dx &\le  C\frac{1}{|B_r|} \int_{B_r} \left(\int_{\real^n} \frac1{|y|^n} \abs{f_2(x-y)}^{p'}~ dy  \right)^{1/p'}dx \\
&=  C\frac{1}{|B_r|} \int_{B_r} \left(\int_{\real^n} \frac1{|x-y|^n} \abs{f_2(y)}^{p'}~ dy  \right)^{1/p'}dx \\
&\le C\frac{\norm{f}_{L^\infty(\real^n)}}{|B_r|} \int_{B_r} \left(\int_{r\le |x-y| \le 2} \frac1{|x-y|^n} ~ dy \right)^{1/p'}dx \\
 &\sim \Big(\log\frac{2}{r}\Big)^{1/p'}\norm{f}_{L^\infty(\real^n)},
\end{align*}
where we have used the fact  $y\in B\backslash B_{2r}$ and $x\in B_r$.
Therefore we arrive to  $$\frac{1}{|B_r|} \int_{B_r} \abs{T^* f(x)}dx\le C\left(1+\Big(\log\frac{2}{r}\Big)^{1/p'}\right)\norm{f}_{L^\infty(\real^n)}  \le C\Big(\log\frac{2}{r}\Big)^{1/p'}\norm{f}_{L^\infty(\real^n)}.$$
 Then we get the proof of Theorem  \ref{Thm:GrothLinfinity}.

 \end{proof}

\vspace{2cm}

\end{document}